\def\@seccntformat#1{\csname the#1\endcsname.\quad}
\renewcommand\section{\@startsection {section}{1}{\z@}%
                                   {-3.5ex \@plus -1ex \@minus -.2ex}%
                                   {2.3ex \@plus.2ex}%
                                   {\normalfont\bf\center }}
\renewcommand\subsection{\@startsection {subsection}{1}{\z@}%
                                   {-3.5ex \@plus -1ex \@minus -.2ex}%
                                   {2.3ex \@plus.2ex}%
                                   {\normalfont\bf}}
\newcommand{\bR}{\mathbf{R}}  
\newcommand{\bB}{\mathbf{B}} 
\newcommand{\cL}{\mathcal{L}}
\newcommand{\sF}{\mathscr{F}}
\newcommand{\sG}{\mathscr{G}}
\newcommand{\la}{\langle}
\newcommand{\ra}{\rangle}
\newcommand{\vol}{\operatorname{vol}}
  \newtheorem{thm}{Theorem}[section]
  \newtheorem{lemma}[thm]{Lemma}
  \newtheorem{prop}[thm]{Proposition}
  \newtheorem{rem}[thm]{Remark}
\theoremstyle{definition}
\numberwithin{equation}{section}
\begin{document}

\title{Brownian Hitting to Spheres}
\author{Yuji Hamana and Hiroyuki Matsumoto} 
\date{}
\maketitle

\begin{abstract} 
Let $S^{d-1}_r$ be the sphere in $\bR^d$ 
whose center is the origin and the radius is $r$, and 
$\sigma_r$ be the first hitting time to it of the standard Brownian motion 
$\{B_t\}_{t\geqq0}$, possibly with constant drift.   
The aim of this article is 
to show explicit formulae by means of spherical harmonics 
for the density of the joint distribution of $(\sigma_r,B_{\sigma_r})$ 
and to study the asymptotic behavior of the distribution function.  
\footnote{
2020 {\it Mathematics Subject Classification}: 60J65 \\
{\it keywords}\; :\; Brownian motion, hitting times and places, 
spherical harmonics, one-dimensional diffusion, 
}
\end{abstract}

\section{Introduction and main results}

For $d\geqq2$, we consier a standard $d$-dimensional Brownian motion 
$B=\{B_t\}_{t\geqq0}$ starting from a fixed point $(a,0,...,0)$, 
where we assume $a>0$, defined on a probability space $(\Omega,\sF,P_a)$.   
Letting $S_r^{d-1}$ be the sphere in $\bR^d$ with radius $r$ 
and centered at the origin, 
we are concerned with the joint distribution of 
the first hitting time $\sigma_r$ of $B$ to $S_r^{d-1}$ and 
the hitting place $B_{\sigma_r}$.   

The aim of this article is to show an explicit expression for 
the density of the joint distribution by means of the spherical harmonics, 
that is, the Gegenbauer and the Chebyshev polynomials. 
As an application, we study the asymptotic behavior of the tail probability 
$P_a(t < \sigma_r<\infty, B_{\sigma_r}\in A),A\subset S_b^{d-1}$ when $a>r$.  
The joint density for the Brownian motion with constsnt drift 
is also investigated.   

Several authors have studied the joint distribution.   
It should be first noted that in the exit problem, that is the case of $a<r$, 
the joint density is given by a solution for a heat equation 
with the Dirichlet boundary condition.   
See Aizenman-Simon~\cite{AS} for general discussion 
and Hsu~\cite{Hsu} for an explicit expression in the case of spheres.   
Wendel~\cite{Wendel} has shown a nice result on the expectations 
of functions of $(\sigma_b, B_{\sigma_b})$ by using the spherical harmonics.   
See Gzyl~\cite{Gzyl} and references therein 
for a recent study on this direction and 
Uchiyama \cite{Uchi1,Uchi2} on the asymptotic behavior of 
the distribution functions and its application to the Wiener sausage.   
A similar problem for a Brownian motion with drift 
has been discussed in Yin-Wang~\cite{YinWang}.  

We proceed to a different way.   
Starting from the skew-product representation of Brownian motion, 
we use the fact due to Mijatovic-Mramor-Uribe Bravo~\cite{MiMrUr} 
that the projections of the Brownian motion on the sphere 
$S^{d-1}=S^{d-1}_1$ define diffusion processes.   
We see that, for one-dimensional projections, the eigenvalues and 
the eigenfunctions for the generators are explicitly given 
by the spherical harmonics.  

Combining these facts with the rotation invariance of 
the probability law of Brownian motion, we show the following.  
As usual we denote by $I_\nu$ and $K_\nu$ the modified Bessel functions.  
We also denote by $C^\nu_n$ and $T_n$ the Gegenbauer and the Chebyshev 
polynomials, respectively.   

\begin{thm} \label{t:LT}
Denote by $E_a$ the expectation with respect to $P_a$.   
Then, for $\lambda>0$ and $u\in\bR^d$, we have 
\begin{equation*} \begin{split}
E_a[e^{-\lambda\sigma_r} e^{\la u,B_{\sigma_r} \ra}] = 
 & \frac{\cL_0(a\sqrt{2\lambda})}{\cL_0(r\sqrt{2\lambda})} 
\int_{S^1} e^{r\la u, z\ra} ds(z) \\
 & + 2 \sum_{n=1}^\infty 
\frac{\cL_n(a\sqrt{2\lambda})}{\cL_n(r\sqrt{2\lambda})} 
\int_{S^1}e^{r \la u,z\ra} T_n(z_1) ds(z)
\end{split} \end{equation*}
when $d=2$ and 
\begin{equation*} \begin{split}
E_a\Big[ & e^{-\lambda\sigma_r} e^{\la u,B_{\sigma_r}\ra} 
I_{\{\sigma_r<\infty\}}\Big] \\ 
 & = 
\frac{1}{\nu} \sum_{n=0}^\infty (n+\nu) 
\frac{a^{-\nu}\cL_{n+\nu}(a\sqrt{2\lambda})}
{r^{-\nu}\cL_{n+\nu}(r\sqrt{2\lambda})}
\int_{S^{d-1}} e^{r \la u,z \ra} C_n^\nu(z_1) ds(z) 
\end{split} \end{equation*}
when $d\geqq 3$, where $ds$ is the uniform probability measure on $S^{d-1}$, 
and $\cL=I$ for $a<r$ and $\cL=K$ for $a>r$. 
\end{thm}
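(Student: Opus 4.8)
\medskip
\noindent\textbf{Proof strategy.}
The plan is to combine the skew-product decomposition of Brownian motion with classical facts about Bessel processes, so that the left-hand side is reduced to an expansion of $z\mapsto e^{r\la u,z\ra}$ into spherical harmonics together with a joint Laplace transform of $(\sigma_r,A_{\sigma_r})$ for the radial part, where $A_t=\int_0^t|B_s|^{-2}\,ds$ is the time-change clock. First I would write $R_t=|B_t|$, which under $P_a$ is a Bessel process of dimension $d$ started at $a$, and use the skew-product representation $B_t=R_t\,\vartheta(A_t)$, where $\vartheta$ is a Brownian motion on $S^{d-1}$ started at $e_1=(1,0,\dots,0)$ and independent of $R$ (the facts quoted from \cite{MiMrUr} enter here, identifying $\vartheta$ as a genuine diffusion with generator $\tfrac12\Delta$ on $S^{d-1}$). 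Since $\sigma_r=\inf\{t:R_t=r\}$ is measurable with respect to $R$ and $A_{\sigma_r}<\infty$ on $\{\sigma_r<\infty\}$, conditioning on the path of $R$ and using independence of $\vartheta$ gives
\[
E_a\big[e^{-\lambda\sigma_r}e^{\la u,B_{\sigma_r}\ra}I_{\{\sigma_r<\infty\}}\big]
 = E_a\Big[e^{-\lambda\sigma_r}I_{\{\sigma_r<\infty\}}\,\big(e^{\frac{A_{\sigma_r}}{2}\Delta}h_r\big)(e_1)\Big],
\]
where $h_r(z)=e^{r\la u,z\ra}$ and $e^{\frac t2\Delta}$ is the heat semigroup of $\vartheta$.

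Next I would expand $h_r=\sum_{n\geqq0}(h_r)_n$ into spherical-harmonic components. Since $\Delta$ acts on the degree-$n$ component as multiplication by $-n(n+d-2)=-((n+\nu)^2-\nu^2)$ with $\nu=(d-2)/2$, and since the value at $e_1$ of the degree-$n$ component is reproduced by the corresponding zonal harmonic, i.e.\ by $C_n^\nu(\la e_1,z\ra)=C_n^\nu(z_1)$ via the addition theorem, one gets
\[
\big(e^{\frac t2\Delta}h_r\big)(e_1)=\frac1\nu\sum_{n=0}^\infty(n+\nu)\,e^{-\frac t2((n+\nu)^2-\nu^2)}\int_{S^{d-1}}e^{r\la u,z\ra}C_n^\nu(z_1)\,ds(z)
\]
when $d\geqq3$, the constant $(n+\nu)/\nu$ being the ratio of the dimension of the degree-$n$ spherical harmonics to $C_n^\nu(1)$; when $d=2$ (so $\nu=0$) the Gegenbauer factor degenerates and is replaced by $T_n(z_1)$ with constant $1$ for $n=0$ and $2$ for $n\geqq1$. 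Substituting with $t=A_{\sigma_r}$ and interchanging summation and expectation reduces the theorem to proving, for each $n$,
\[
E_a\big[e^{-\lambda\sigma_r}I_{\{\sigma_r<\infty\}}\,e^{-\frac12((n+\nu)^2-\nu^2)A_{\sigma_r}}\big]
 = \frac{a^{-\nu}\cL_{n+\nu}(a\sqrt{2\lambda})}{r^{-\nu}\cL_{n+\nu}(r\sqrt{2\lambda})},
\]
with $\cL_n$ in place of $\cL_{n+\nu}$ when $d=2$.

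To obtain this identity I would invoke the absolute-continuity relation between Bessel processes of different indices: the process $N_t=(R_t/a)^n\exp\!\big(-\tfrac12((n+\nu)^2-\nu^2)A_t\big)$ is a $P_a$-martingale, and under the measure with density $N_t$ on $\sF_t$ the coordinate process $R$ is a Bessel process of dimension $d+2n$, that is of index $n+\nu$, started at $a$. Applying optional stopping at $\sigma_r$, passing to the limit $t\to\infty$ (using $\lambda>0$ to suppress $\{\sigma_r=\infty\}$), and using $R_{\sigma_r}=r$ on $\{\sigma_r<\infty\}$ turns the left-hand side into $(a/r)^n\,\wt{E}_a\big[e^{-\lambda\sigma_r}I_{\{\sigma_r<\infty\}}\big]$, where $\wt{E}_a$ denotes expectation for the Bessel process of index $n+\nu$. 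The Laplace transform of its hitting time of $r$, classical and read off from the two $\lambda$-harmonic functions $x\mapsto x^{-(n+\nu)}I_{n+\nu}(x\sqrt{2\lambda})$ and $x\mapsto x^{-(n+\nu)}K_{n+\nu}(x\sqrt{2\lambda})$, equals $a^{-(n+\nu)}\cL_{n+\nu}(a\sqrt{2\lambda})\big/r^{-(n+\nu)}\cL_{n+\nu}(r\sqrt{2\lambda})$, with $\cL=I$ when $a<r$ and $\cL=K$ when $a>r$; multiplying by $(a/r)^n$ recombines the powers of $a$ and $r$ into $a^{-\nu}$ and $r^{-\nu}$ and yields the asserted expression. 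The $d=2$ case runs identically with $\nu=0$.

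\medskip
\noindent\textbf{Main obstacle.}
The delicate points are the two interchanges of limits. First, the optional-stopping identity at the possibly infinite time $\sigma_r$ for the change-of-measure martingale $N$ must be justified, either through a uniform-integrability argument or through a careful truncation-and-limit passage on $\sigma_r\wedge t$. Second, the term-by-term summation over $n$ requires controlling the size of $\int_{S^{d-1}}e^{r\la u,z\ra}C_n^\nu(z_1)\,ds(z)$ together with the ratios $\cL_{n+\nu}(a\sqrt{2\lambda})/\cL_{n+\nu}(r\sqrt{2\lambda})$, so that the series converges and may be exchanged with $E_a$; this should follow from the rapid decay in $n$ of those integrals combined with standard growth and monotonicity estimates for $I_{n+\nu}$ and $K_{n+\nu}$. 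By contrast, the skew-product representation, the Bessel absolute-continuity relation, and the identification of the constant $(n+\nu)/\nu$ from the addition theorem are standard once the setup is fixed.
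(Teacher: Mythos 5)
Your proposal is correct and rests on the same skeleton as the paper's proof: the skew-product representation $B_t=R_t\theta(\Xi_t)$, independence of the radial and angular parts, a spherical-harmonic expansion for the angular factor, and the Laplace transform of the Bessel hitting time. Two intermediate steps, however, are handled differently, and both variations are legitimate alternatives. First, where you expand $h_r$ directly via the heat semigroup $e^{\frac{t}{2}\Delta}$ on $S^{d-1}$ and invoke the addition theorem to identify the zonal factor $C_n^\nu(z_1)$ with coefficient $(n+\nu)/\nu$ (the ratio $N(n,d)/C_n^\nu(1)$), the paper instead reduces to the one-dimensional projection $\theta_1$ using the Mijatovic--Mramor--Uribe Bravo result that $\theta_1$ is a diffusion on $(-1,1)$ with generator $\sG_d$, and then uses the explicit Chebyshev/Gegenbauer eigenfunction expansion of its transition density together with the observation that the conditional law of $(\theta_2,\dots,\theta_d)$ given $\theta_1$ is uniform on a sphere of the appropriate radius. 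These are two presentations of the same eigenfunction expansion; yours is perhaps slightly more streamlined, while the paper's sets up the one-dimensional machinery it reuses elsewhere. Second, for the key radial identity
\begin{equation*}
E_a^{(\nu)}\bigl[e^{-\lambda\tau_r-\frac12 n(n+2\nu)\Xi_{\tau_r}}\,I_{\{\tau_r<\infty\}}\bigr]
=\frac{a^{-\nu}\cL_{n+\nu}(a\sqrt{2\lambda})}{r^{-\nu}\cL_{n+\nu}(r\sqrt{2\lambda})},
\end{equation*}
you derive it from the Bessel absolute-continuity relation (the exponential martingale $N_t=(R_t/a)^n\exp(-\tfrac12 n(n+2\nu)\Xi_t)$ shifting the index from $\nu$ to $n+\nu$) plus the classical hitting-time Laplace transform, whereas the paper simply cites the formula from Borodin--Salminen. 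Your justification of the two limit interchanges (optional stopping at a possibly infinite $\sigma_r$, and term-by-term summation controlled by the decay of the Fourier coefficients and of $\cL_{n+\nu}(a\sqrt{2\lambda})/\cL_{n+\nu}(r\sqrt{2\lambda})$) is sketched but sound; the paper handles the analogous interchange using the polynomial bound $C_n^\nu(1)\leqq Cn^{2\nu-1}$.
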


Setting $u=0$ and 
noting that the surface integrals of $T_n(z_1)$ and $C_n^\nu(z_1)$ 
vanish for $n\geqq1$, we recover the well known formula 
for $E_a[e^{-\lambda\sigma_r}]$ (cf. \cite{BS}).

We can invert the joint Laplace transform and obtain the following.  
We denote by $\rho_{a,r}^{(\nu)}(t)$ the probability density 
of the first hitting time to $r$ of a Bessel process with index $\nu$ 
starting from $a$.   

\begin{thm} \label{t:D}
For $t>0$ and $z\in\bR^d$ with $|z|=r$, we have 
\begin{equation*}
P_a(\sigma_r\in dt, B_{\sigma_r}\in dz) = 
\rho_{a,r}^{(0)}(t) dt ds_r(z) + 
2 \sum_{n=1}^\infty \big(\frac{a}{r}\big)^n \rho_{a,r}^{(n)}(t) 
T_n\big(\frac{z_1}{r}\big) dt ds_r(z)
\end{equation*}
when $d=2$ and 
\begin{equation*} \begin{split} 
P_a(\sigma_r\in dt, & B_{\sigma_r}\in dz) \\ 
 & = \frac{1}{\nu} \sum_{n=0}^\infty \big( n+\nu \big) 
\big(\frac{a}{r}\big)^n \rho_{a,r}^{(n+\nu)}(t)  
C_n^{\nu}\big( \frac{z_1}{r} \big) dt ds_r(z)
\end{split} \end{equation*}
when $d\geqq3$, where $\nu=\frac{d-2}{2}$ and 
$ds_r$ is the uniform probability measire on $S^{d-1}_r$.
\end{thm}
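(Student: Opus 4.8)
The plan is to obtain the two formulae by inverting, term by term in $\lambda$, the joint Laplace transform furnished by Theorem~\ref{t:LT}. Two ingredients carry the argument: a classical identity that recognizes the Bessel ratio $\cL_\mu(a\sqrt{2\lambda})/\cL_\mu(r\sqrt{2\lambda})$ as a Laplace transform of a hitting density, and the change of variables relating integrals over $S^{d-1}$ to integrals over $S^{d-1}_r$. The one genuinely delicate point is to justify that the infinite sum may be interchanged with the integral transforms.

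First I would record the well known fact (see \cite{BS}) that for a Bessel process of index $\mu\geqq0$ started from $a$ the density $\rho_{a,r}^{(\mu)}$ of its first hitting time of the level $r$ satisfies
\begin{equation*}
\int_0^\infty e^{-\lambda t}\,\rho_{a,r}^{(\mu)}(t)\,dt
 = \frac{a^{-\mu}\cL_\mu(a\sqrt{2\lambda})}{r^{-\mu}\cL_\mu(r\sqrt{2\lambda})},
 \qquad \lambda>0,
\end{equation*}
with $\cL=I$ when $a<r$ and $\cL=K$ when $a>r$; this reflects the fact that $x\mapsto x^{-\mu}I_\mu(x\sqrt{2\lambda})$ and $x\mapsto x^{-\mu}K_\mu(x\sqrt{2\lambda})$ are, respectively, the increasing and the decreasing solution of the eigenvalue equation $\tfrac12 f''+\tfrac{2\mu+1}{2x}f'=\lambda f$ for the generator of the Bessel process. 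Taking $\mu=n+\nu$ and multiplying by $(a/r)^n$, the powers of $a$ and of $r$ partly cancel and the $n$-th coefficient becomes exactly $a^{-\nu}\cL_{n+\nu}(a\sqrt{2\lambda})\big/\big(r^{-\nu}\cL_{n+\nu}(r\sqrt{2\lambda})\big)$, which is the coefficient appearing in Theorem~\ref{t:LT}; when $d=2$ (so $\nu=0$) it reduces to $\cL_n(a\sqrt{2\lambda})/\cL_n(r\sqrt{2\lambda})$.

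Next, the dilation $w\mapsto rw$ pushes the uniform probability measure $ds$ on $S^{d-1}$ forward to the uniform probability measure $ds_r$ on $S^{d-1}_r$, under which $\la u,rw\ra=r\la u,w\ra$ and $w_1$ becomes $z_1/r$; hence $\int_{S^{d-1}}e^{r\la u,w\ra}C_n^\nu(w_1)\,ds(w)=\int_{S^{d-1}_r}e^{\la u,z\ra}C_n^\nu(z_1/r)\,ds_r(z)$, and likewise with $T_n$ in place of $C_n^\nu$. Granting the interchange discussed below, I would then apply $\int_0^\infty\!\int_{S^{d-1}_r}e^{-\lambda t}e^{\la u,z\ra}(\,\cdot\,)\,dt\,ds_r(z)$ to the series on the right-hand side of the claimed identity, bring the two integrations inside the sum, perform the $t$-integral with the Bessel identity above, and rewrite the spherical integral by the change of variables; the result is precisely the right-hand side of Theorem~\ref{t:LT}. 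Thus the measure defined by the asserted density and the true law $P_a(\sigma_r\in dt,\,B_{\sigma_r}\in dz,\,\sigma_r<\infty)$ produce the same value of $E_a[e^{-\lambda\sigma_r}e^{\la u,B_{\sigma_r}\ra};\,\sigma_r<\infty]$ for every $\lambda>0$ and every $u\in\bR^d$. Since the Laplace transform in $t$ is injective on finite measures on $(0,\infty)$, and since $\{z\mapsto e^{\la u,z\ra}:u\in\bR^d\}$ spans a point-separating unital subalgebra of $C(S^{d-1}_r)$ and is therefore dense in it, this double transform determines a unique finite measure on $(0,\infty)\times S^{d-1}_r$, so the two measures coincide. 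The case $d=2$ is treated identically with integer-index Bessel functions and Chebyshev polynomials, or obtained by letting $\nu\downarrow0$ and using $\tfrac{n+\nu}{\nu}C_n^\nu\to 2T_n$ for $n\geqq1$.

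The main obstacle I anticipate is precisely the justification of these formal steps, namely that the series $\sum_n(n+\nu)(a/r)^n\rho_{a,r}^{(n+\nu)}(t)\,C_n^\nu(z_1/r)$ converges (locally uniformly in $t>0$ and in $z\in S^{d-1}_r$) and is absolutely integrable against $e^{-\lambda t}e^{\la u,z\ra}$, so that it defines a finite measure and Fubini together with the term-by-term inversion of the Laplace transform are valid. Using the bound $|C_n^\nu(x)|\leqq C_n^\nu(1)$ for $|x|\leqq1$, which grows only polynomially in $n$, it is enough to control $\rho_{a,r}^{(n+\nu)}(t)$ for large $n$. This is harmless when $a<r$, since then $(a/r)^n$ decays geometrically; the delicate regime is $a>r$ (so $\cL=K$), where $(a/r)^n$ grows and has to be absorbed by the rapid decay in $n$ of the hitting density, at a level below its starting point, of a Bessel process of large index — its total mass being $(r/a)^{2(n+\nu)}$, which leaves a net geometric factor $(r/a)^{n+2\nu}$. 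I expect the needed pointwise and integrated bounds on $\rho_{a,r}^{(n+\nu)}$ to follow from the uniform large-order asymptotics of $I_\nu$ and $K_\nu$ that already enter the proof of Theorem~\ref{t:LT}.
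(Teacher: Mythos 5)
Your proposal follows essentially the same route as the paper: it recognizes the coefficients in Theorem~\ref{t:LT} as Laplace transforms of the Bessel hitting densities $\rho_{a,r}^{(n+\nu)}$ via the cited formula from \cite{BS}, inverts term by term, and notes that the interchange of sum and integral must be justified (the paper simply asserts this is "easily" done, while you correctly isolate the $a>r$, $\cL=K$ regime and the geometric gain $(r/a)^{n+2\nu}$ from the total hitting mass as the key to it). Your use of injectivity of the Laplace transform and density of $\{e^{\la u,\cdot\ra}\}$ in $C(S^{d-1}_r)$ is just a rigorous phrasing of the paper's inversion step, so the two arguments are the same in substance.
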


The authors~\cite{ProcAMS} have shown another expression 
for the joint Laplace transform, from which we can prove Theorem \ref{t:D}.   

The rest of this article is organized as follows.   
In the next Section 2 we study the first coordinate or the one-dimensional 
projection of the Brownian motion on $S^{d-1}$.   
We give proofs of the theorems mentioned above in Section 3 and, 
the asymptotic behavior of $P_a(t < \sigma_r<\infty,B_{\sigma_r}\in A)$, 
$A\subset S_r^{d-1}$, as $t\to\infty$ is investigated in Section 4.  
In the final Section 5, 
we deal with the Brownian motion with constant drift.   


\section{Projection of Brownian motion on sphere}

Let $\theta=\{\theta(t)\}_{t\geqq0}$ be a Brownian motion on $S^{d-1}$, 
which corresponds to the Laplace-Beltrami operator on $S^{d-1}$, 
endowed with the usual Euclidean metric.   
Mijatovic-Mramor-Uribe Bravo~\cite{MiMrUr} has shown that 
the projections of $\theta$ are diffusion processes which are realized 
as unique solutions of stochastic differential equations.   
This fact, especially on the one-dimensional projections, is fundamental 
in our argument and we recall the result in this special case.   

\begin{prop} \label{p:proj} 
The first coordinate $\{\theta_1(t)\}_{t\geqq0}$ of $\theta$ is 
a diffusion process on $(-1,1)$ whose generator is 
\begin{equation*}
\sG_d = \frac12 (1-x^2) \frac{d^2}{dx^2} - \frac{d-1}{2}x\frac{d}{dx}.
\end{equation*}
\end{prop}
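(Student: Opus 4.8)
The plan is to derive the generator of $\theta_1$ directly from the Laplace--Beltrami operator on $S^{d-1}$ by choosing an adapted coordinate system, rather than invoking the full SDE machinery of \cite{MiMrUr}. I would parametrize $S^{d-1}$ by the ``latitude'' $\phi\in[0,\pi]$ measured from the north pole $(1,0,\dots,0)$ together with a point $\omega\in S^{d-2}$, so that a generic point is $z=(\cos\phi,\ \sin\phi\,\omega)$ and $z_1=\cos\phi$. In these coordinates the round metric splits as $g = d\phi^2 + \sin^2\!\phi\, g_{S^{d-2}}$, and the Laplace--Beltrami operator takes the standard warped-product form
\begin{equation*}
\Delta_{S^{d-1}} = \frac{\pa^2}{\pa\phi^2} + (d-2)\cot\phi\,\frac{\pa}{\pa\phi} + \frac{1}{\sin^2\!\phi}\,\Delta_{S^{d-2}}.
\end{equation*}
Since the Brownian motion $\theta$ has generator $\tfrac12\Delta_{S^{d-1}}$, and since $z_1=\cos\phi$ depends only on the $\phi$-variable, applying $\tfrac12\Delta_{S^{d-1}}$ to a function $f(z_1)=f(\cos\phi)$ kills the $\Delta_{S^{d-2}}$ term entirely and leaves a one-dimensional second-order operator in $\phi$.

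Next I would change variables from $\phi$ to $x=\cos\phi$. With $\tfrac{d}{d\phi} = -\sin\phi\,\tfrac{d}{dx}$, a short computation gives $\tfrac{\pa^2}{\pa\phi^2} = \sin^2\!\phi\,\tfrac{d^2}{dx^2} - \cos\phi\,\tfrac{d}{dx}$, and $(d-2)\cot\phi\,\tfrac{\pa}{\pa\phi} = -(d-2)\cos\phi\,\tfrac{d}{dx}$. Using $\sin^2\!\phi = 1-x^2$ and $\cos\phi = x$, the radial part of $\tfrac12\Delta_{S^{d-1}}$ becomes
\begin{equation*}
\frac12(1-x^2)\frac{d^2}{dx^2} - \frac12\big(1 + (d-2)\big)x\,\frac{d}{dx} = \frac12(1-x^2)\frac{d^2}{dx^2} - \frac{d-1}{2}x\,\frac{d}{dx},
\end{equation*}
which is exactly $\sG_d$. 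This identifies the generator of $\theta_1$ on smooth functions of $x$; one should also note that the formula $z_1\mapsto z_1$ is a genuine functional of the process, so by It\^o's formula applied through the coordinate chart, $\{\theta_1(t)\}$ is a (time-homogeneous) Markov process whose generator on $C^2$-functions of $x\in(-1,1)$ is $\sG_d$.

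The only delicate point is the behavior at the endpoints $x=\pm1$, i.e. at the poles of the sphere, where the coordinate chart degenerates; so the main obstacle is checking that $\theta_1$ is a well-defined diffusion on $(-1,1)$ with $\sG_d$ as its generator in the proper (one-dimensional diffusion) sense, rather than merely a process satisfying this identity away from the boundary. This is handled by the result of \cite{MiMrUr}, which already guarantees that the projection is a diffusion realized as the unique solution of the SDE $d\theta_1 = \sqrt{1-\theta_1^2}\,dW - \tfrac{d-1}{2}\theta_1\,dt$; the computation above simply identifies its generator with $\sG_d$ and confirms consistency. Alternatively one can verify directly that the poles are accessible but instantaneously reflecting in the natural-scale/speed-measure classification associated to $\sG_d$, matching the fact that Brownian motion on the sphere spends zero time at any fixed point. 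Either way, the essential content is the coordinate computation above.
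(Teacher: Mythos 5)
Your derivation is correct and takes a genuinely different route from the paper: the paper gives no proof of Proposition~\ref{p:proj}, citing~\cite{MiMrUr} for the full SDE characterization of all projections, whereas you give a self-contained computation of the generator from the warped-product form of the Laplace--Beltrami operator in polar/latitude coordinates. The coordinate change $x=\cos\phi$ and the resulting identity
\begin{equation*}
\tfrac12\Big(\tfrac{\pa^2}{\pa\phi^2}+(d-2)\cot\phi\,\tfrac{\pa}{\pa\phi}\Big)
=\tfrac12(1-x^2)\tfrac{d^2}{dx^2}-\tfrac{d-1}{2}x\tfrac{d}{dx}
\end{equation*}
are correct, and the fact that $\tfrac12\Delta_{S^{d-1}}$ maps functions of $z_1$ to functions of $z_1$ (because the $\Delta_{S^{d-2}}$ term annihilates them) is exactly the Dynkin lumping criterion that makes $\theta_1$ Markov. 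You could state this last point more explicitly --- being a ``functional of the process'' is not by itself enough for the Markov property; what does the work is the rotational symmetry about the $x_1$-axis, which makes the transition kernel constant on the level sets of $z_1$. What your approach buys is transparency: the reader sees where $\sG_d$ comes from without opening~\cite{MiMrUr}. What the paper's approach buys is that~\cite{MiMrUr} already settles existence, uniqueness, and the global (including boundary) behavior of the one-dimensional diffusion, so nothing further needs checking.

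One concrete slip in the closing remark: you say the poles are ``accessible but instantaneously reflecting.'' This holds only for $d=2$. For $d\geqq3$ the boundaries $\pm1$ are \emph{entrance} boundaries --- the process started in $(-1,1)$ never reaches them --- as the paper itself notes immediately after the proposition, and as is consistent with the fact that spherical Brownian motion in dimension $\geqq2$ almost surely never hits a fixed point. This does not damage your main computation, which is sound, but the alternative boundary-classification argument should be stated separately for $d=2$ (regular, reflecting) and $d\geqq3$ (entrance).
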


We see easily that 
the boundaries $\pm 1$ are regular and reflecting when $d=2$ and 
they are entrance ones when $d\geqq 3$.   
The eigenvalues and the eigenfunctions of $\sG_d$ are explicitly given and 
we have the eigenfunction expansion for the transition densities.   
Since these play important roles in the following sections, 
we now recall some fundamental facts.   
For details of the Chebyshev and the Gegenbauer polynomials below, 
we refer to \cite{GR,MOS,Muller}.   

Write 
\begin{equation*}
\sG_d = \frac{1}{2(1-x^2)^{\frac{d-3}{2}}} \frac{d}{dx} \Big( 
\frac{1}{(1-x^2)^{-\frac{d-1}{2}}} \frac{d}{dx} \Big) 
\end{equation*}
and let $dm(x)=2(1-x^2)^{\frac{d-3}{2}}dx$ be the canonical (speed) measure.   
Note that $m$ is a finite measure on $(-1,1)$.   
Moreover, we take 
\begin{equation*}
s(x)=\int_0^x (1-y^2)^{-\frac{d-1}{2}}dy
\end{equation*}
as the scale function.

When $d=2$, $s(\pm1)$ are both finite and the boundaries are regular.   
The Chebyshev polynomial $T_n(x)=\cos( n \arccos x)$ satisfies 
\begin{equation*}
\sG_2T_n=-\frac{n^2}{2}T_n \qquad \text{and} \qquad
\frac{d}{ds}T_n(\pm1)=0.
\end{equation*}
Moreover the orthogonality relation is given by 
\begin{equation*}
\int_{-1}^1 T_m(x) T_n(x) \frac{dx}{\sqrt{1-x^2}} = 
\begin{cases} 0 & m\ne n \\ \frac{\pi}{2} & m=n\ne0 \\ \pi & m=n=0 .
\end{cases} \end{equation*}
Hence, setting 
\begin{equation*}
\phi_0^0(x)=\frac{1}{\sqrt{2\pi}}, \qquad 
\phi_n^0(x)= \frac{1}{\sqrt{\pi}}T_n(x) \quad (n\geqq1),
\end{equation*}
we see that 
$\{\phi_n^0\}_{n=0}^\infty$ gives rise to an orthonormal basis of $L^2(dm)$ 
and that the transition density $p_2(t,x,y)$ of $\{\theta_1(t)\}$ 
with respect to $dm$ is given by 
\begin{equation} \label{2e:d=2}
p_2(t,x,y) = \frac{1}{2\pi} + \frac{1}{\pi} \sum_{n=1}^\infty 
e^{-\frac{1}{2}n^2t} T_n(x) T_n(y).
\end{equation}
\indent 
For $d\geqq3$, the eigenfunctions are given 
by the Gegenbauer polynomials $C_n^\nu$ defined by 
\begin{equation*}
\sum_{n=0}^\infty s^n C_n^\nu(x) = \frac{1}{(1+s^2-2sx)^\nu}, \qquad |s|<1,
\end{equation*}
where $\nu=(d-2)/2$.   
In fact, we have 
\begin{equation*}
\sG_d C_n^\nu = -\frac12 n(n+2\nu)C_n^\nu \qquad \text{and} \qquad 
\frac{d}{ds}C_n^\nu(\pm 1)=0
\end{equation*}
and the orthogonality relation 
\begin{equation*}
\int_{-1}^1 C_m^\nu(x) C_n^\nu(x) (1-x^2)^{\nu-\frac12 }dx = \delta_{m,n} 
\frac{\pi \Gamma(n+2\nu)}{2^{2\nu-1}(n+\nu)n!(\Gamma(\nu))^2}.
\end{equation*}
Hence, setting 
\begin{equation*}
\phi_n^\nu(x) = \Big( \frac{(n+\nu)n!}{\pi\Gamma(n+2\nu)} \Big)^{\frac{1}{2}} 
2^{\nu-1}\Gamma(\nu) C_n^\nu(x),
\end{equation*}
we obtain an orthonormal basis $\{\phi_n^\nu\}_{n=0}^\infty$ of $L^2(dm)$ and 
an eigenfunction expsnsion for the transition density $p_d(t,x,y)$ 
of $\{\theta_1(t)\}$ with respect to $dm$, 
\begin{equation} \label{2e:d}
p_d(t,x,y) = \sum_{n=0}^\infty e^{-\frac{1}{2}n(n+2\nu)t} 
\phi_n^\nu(x)\phi_n^\nu(y).
\end{equation}


\section{Proof of Theorems \ref{t:LT} and \ref{t:D}}

We use the same notation as those in Section 1 and start the argument 
from the skew-product representation of the standard Brownian motion 
$B=\{B_t\}_{t\geqq0}$: there exists a $d$-dimensional Bessel process 
$R=\{R_t\}_{t\geqq0}$ (with index $\nu=(d-2)/2$) and a Brownian motion 
$\theta=\{\theta(t)\}_{t\geqq0}$ on $S^{d-1}$, independent of $R$, such that 
\begin{equation*}
B_t = R_t \theta(\Xi_t), \qquad \Xi_t=\int_0^t \frac{ds}{R_s^2}.
\end{equation*}
$B_0=(a,0,...,0)$ means $R_0=a$ and $\theta(0)=(1,0,...,0)$.   
By the independence of $R$ and $\theta$, we have 
\begin{equation*}
E_a[e^{-\lambda\sigma_r} e^{\la u,B_{\sigma_r} \ra}] = 
E_a^{(\nu)}[ e^{-\lambda\tau_r}  
E_a[e^{r\la u,\theta(t) \ra}]\Big|_{t=\Xi_{\tau_r}}],
\end{equation*}
where $E_a^{(\nu)}[\ \cdot\ ]$ denotes the expectation 
with respect to the probability law of $R$ and 
$\tau_r$ is the first hitting time of $R$ to $r$.   

First we prove the theorems when $d=2$.   
Writing $\theta(t)=(\theta_1(t),\theta_2(t))$ and $u=(u_1,u_2)$, 
we have by the rotation invariance of the law of standard Brownian motion  
\begin{equation*} \begin{split} 
 E_a[e^{r \la u,\theta(t) \ra} ] & = 
E_a[e^{r u_1 \theta_1(t)} E_a[e^{r u_2 \theta_2(t)} | \theta_1(t)]] \\
 & = \int_{-1}^1 e^{ru_1y} \frac12 
\big( e^{ru_2\sqrt{1-y^2}} + e^{-ru_2\sqrt{1-y^2}} \big) P(\theta_1(t)\in dy).
\end{split} \end{equation*}
Hence formula \eqref{2e:d=2} implies 
\begin{align*}
 & E_a[e^{r \la u,\theta(t) \ra} ] \\
 & = \frac{1}{2\pi} \int_{-1}^1 e^{ru_1y} 
\frac12 \big( e^{ru_2\sqrt{1-y^2}} + e^{-ru_2\sqrt{1-y^2}} \big) 
\frac{2dy}{\sqrt{1-y^2}} \\ 
 & \quad + \frac{1}{\pi} \sum_{n=1}^\infty e^{-\frac{1}{2}n^2t} \int_{-1}^1 
e^{ru_1y} \frac12 \big( e^{ru_2\sqrt{1-y^2}} + e^{-ru_2\sqrt{1-y^2}} \big)
T_n(y) \frac{2dy}{\sqrt{1-y^2}}
\end{align*}
since $T_n(1)=1$.   
The change of order of the intengal and the sum is easily justified 
because $|T_n(y)|\leqq1$.   
We can write the integrals on the right hand side 
as surface integrals and obtain 
\begin{equation*}
E_a[e^{r \la u,\theta(t) \ra} ] = \int_{S^1}e^{r\la u,z \ra} ds(z) + 
2\sum_{n=1}^\infty e^{-\frac{1}{2}n^2t} 
\int_{S^1} e^{r\la u,z \ra} T_n(z_1) ds(z).
\end{equation*}
Now, recalling the formula (\cite[p.407]{BS})
\begin{equation} \label{3e:Bessel2}
E_a^{(0)}[ e^{-\lambda\tau_r-\frac{1}{2}n^2\Xi_{\tau_r}} ] = 
\frac{\cL_n(a\sqrt{2\lambda})}{\cL_n(r\sqrt{2\lambda})},
\end{equation}
we obtain the assertion of Theorem \ref{t:LT} when $d=2$.  

Next note another formula (\cite[p.398]{BS})
\begin{equation*}  
E_a^{(\mu)}[ e^{-\lambda\tau_r} ] = 
\int_0^\infty e^{-\lambda t} \rho^{(\mu)}_{a,r}(t)dt = 
\frac{a^{-\mu}\cL_\mu(a\sqrt{2\lambda})}{r^{-\mu}\cL_\mu(r\sqrt{2\lambda})}.
\end{equation*}
Then we obtain Theorem \ref{t:D} when $d=2$.   
Again we can easily show the absolute convergence and 
justify the change of the sum and the integrals in $t$.   

Next we prove the theorems in the case of $d\geqq3$, 
when, for the spherical Brownian motion $\theta$, 
the conditional distribution of $(\theta_2(t),...,\theta_d(t))$ 
given $\theta_1(t)=\xi_1$ is the uniform distribution 
on the sphere $S^{d-2}_{\sqrt{1-\xi_1^2}}$ with raduis $\sqrt{1-\xi_1^2}$.   
Hence, writing $u=(u_1,u'),\theta=(\theta_1,\theta')\in\bR\times \bR^{d-1}$, 
we have 
\begin{equation*}
 E_a[e^{\la u,r\theta(t)\ra}] = 
E_a\Big[ e^{ru_1\theta_1(t)} \int_{S^{d-2}} 
e^{r\sqrt{1-\theta_1(t)^2}\la u', \xi'\ra}\ 
ds(\xi')\Big]. 
\end{equation*}
By using the facts on the Gegenbauer polynomials given in the previous section 
and writing the double integral as a surface integral, 
we obtain, from \eqref{2e:d}
\begin{align*}
 & E_a[e^{\la u,r\theta(t)\ra}] \\
 & = \sum_{n=0}^\infty e^{-\frac{1}{2}n(n+2\nu)t} \phi_n^\nu(1) 
\int_{-1}^1\phi_n^\nu(\xi_1)e^{ru_1\xi_1} 2(1-\xi_1^2)^{\frac{d-3}{2}}d\xi_1 \\
 & \qquad \qquad \qquad \qquad \qquad \qquad \times 
\int_{S^{d-2}} e^{r\sqrt{1-\xi_1^2}\la u',\xi' \ra} 
\frac{\vol(d\xi')}{\vol(S^{d-2})} \\
 & = \sum_{n=0}^\infty 
\frac{(n+\nu)2^{2\nu-1}\Gamma(\nu)^2\vol(S^{d-1})}
{\pi\Gamma(2\nu)\vol(S^{d-2})} e^{-\frac{1}{2}n(n+2\nu)t} 
\int_{S^{d-1}} C_n^\nu(w_1)e^{r\la u,w\ra} ds(w).
\end{align*}
We have used the formula 
$C_n^\nu(1)=\binom{2\nu+n-1}{n}=\Gamma(n+2\nu)/(n!\Gamma(2\nu))$, 
and also the estimate 
\begin{equation} \label{3e:est} 
\max_{|y|\leqq1}|C_n^\nu(y)| = C_n^\nu(1) \leqq Cn^{2\nu-1}
\end{equation}
for some constant $C$ (see, e.g., \cite[pp.218,\;225]{MOS}) 
to justify the change of the order of the sum and the integration.   

Moreover, recalling the foumulae 
\begin{equation*}
\vol(S^{d-1}) = \frac{2\pi^{\frac{d}{2}}}{\Gamma(\frac{d}{2})} 
\qquad \text{and} \qquad 
\Gamma(2\nu)=\frac{2^{2\nu}}{2\sqrt{\pi}} \Gamma(\nu)\Gamma(\nu+\frac12 ),
\end{equation*}
we obtain 
\begin{equation*}
E_a[e^{\la u,r\theta(t)\ra}] = \frac{1}{\nu} \sum_{n=0}^\infty (n+\nu) 
e^{-\frac{1}{2}n(n+2\nu)t} \int_{S^{d-1}} C_n^\nu(w_1) e^{r\la u,w\ra} ds(w).
\end{equation*}
Now, using \eqref{3e:Bessel2}, we obtain the assertion of Theorem \ref{t:LT}.  
Theorem \ref{t:D} is proven in the same way as in the case of $d=2$.   


\section{Asymptotic behavior of distribution function}

In this section, assuming $a>r$ and applying Theorem \ref{t:D}, 
we study the asymptotic behavior of the distribution function 
$P_a(t<\sigma_r<\infty, B_{\sigma_r}\in A)$ as $t\to\infty$ 
for a fixed Borel subset $A$ of the sphere $S_r^{d-1}$.   
We use the same notation as in the previous sections.   

In a course of study on the first hitting times of Bessel processes, 
the authors \cite{HM,ECP} have shown the following.   
Consider a Bessel process with index $\nu$ and starting from $a$ 
defined on some probability space $(\Omega',\sF', Q^{(\nu)}_a)$ 
and let $\tau_r$ be its hitting time to $r$.   
Then the asymptotic behavior 
of $Q_a^{(\nu)}(t<\tau_r<\infty)$ when $a>r$ is given by 
\begin{equation} \label{e:bessel_2}
Q_a^{(0)}(t<\tau_r<\infty)=\frac{2\log(a/r)}{\log t} (1+o(1))
\end{equation}
when $d=2$ and 
\begin{equation} \label{e:bessel_d}
Q_a^{(\nu)}(t<\tau_r<\infty)=\kappa^{(\nu)} t^{-\nu} (1+o(1)), 
\end{equation}
when $d\geqq3$, where the constant $\kappa^{(\nu)}$ is given by 
\begin{equation*}
\kappa^{(\nu)}= \frac{1}{\Gamma(\nu+1)} \Big( \frac{r^3}{2a} \Big)^\nu 
\Big\{ \Big( \frac{a}{r} \Big)^\nu - \Big( \frac{a}{r} \Big)^{-\nu} \Big\}.
\end{equation*}

Applying these results with some estimates for the remainder terms, 
we show the following.

\begin{thm} \label{t:asy}
For any Borel subset $A$ of $S^{d-1}_r$, 
\begin{equation*}
P_a(t<\sigma_r<\infty, B_{\sigma_r}\in A) = 
\frac{2\log(a/r)}{\log t} s_r(A) (1+o(1))
\end{equation*}
holds as $t\to\infty$ when $d=2$ and 
\begin{equation*}
P_a(t<\sigma_r<\infty, B_{\sigma_r}\in A) = 
\kappa^{(\nu)} s_r(A) t^{-\nu} (1+o(1))
\end{equation*}
holds when $d\geqq3$.
\end{thm}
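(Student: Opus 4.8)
The plan is to start from the explicit series in Theorem~\ref{t:D} and integrate it over $A$ and over $(t,\infty)$ in the time variable. Writing
\[
P_a(s<\sigma_r<\infty,B_{\sigma_r}\in A)
= \int_A \int_s^\infty \Big[\rho_{a,r}^{(0)}(u)
+ 2\sum_{n\geqq1}\big(\tfrac{a}{r}\big)^n\rho_{a,r}^{(n)}(u)T_n\big(\tfrac{z_1}{r}\big)\Big]\,du\,ds_r(z)
\]
when $d=2$ (and the analogous Gegenbauer series when $d\geqq3$), the $n=0$ term contributes exactly $s_r(A)\,Q_a^{(0)}(s<\tau_r<\infty)$, since $\int_s^\infty\rho_{a,r}^{(0)}(u)\,du = Q_a^{(0)}(s<\tau_r<\infty)$. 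By \eqref{e:bessel_2} (resp.\ \eqref{e:bessel_d}) this is the claimed main term $\tfrac{2\log(a/r)}{\log s}s_r(A)(1+o(1))$ (resp.\ $\kappa^{(\nu)}s_r(A)s^{-\nu}(1+o(1))$). So the whole content is to show that the tail of the $n\geqq1$ series is $o(1/\log s)$ when $d=2$ and $o(s^{-\nu})$ when $d\geqq3$; there I would use $|T_n(z_1/r)|\leqq1$ and the bound $|C_n^\nu(y)|\leqq Cn^{2\nu-1}$ from \eqref{3e:est}, so that the error is bounded by a constant times $\sum_{n\geqq1}(a/r)^{-n}n^{c}\int_s^\infty\rho_{a,r}^{(n)}(u)\,du$ (note $(a/r)^{-1}<1$), i.e.\ by a constant times a weighted sum of the Bessel tail probabilities $Q_a^{(n)}(s<\tau_r<\infty)$ with geometrically decaying coefficients.

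The key step, then, is a \emph{uniform-in-$n$} estimate for $Q_a^{(n)}(s<\tau_r<\infty)$ (equivalently $Q_a^{(n+\nu)}$ in dimension $d$) that is strong enough to be summed against the geometric weight $(a/r)^{-n}n^{c}$ and still give the right order in $s$. I would revisit the derivation of \eqref{e:bessel_2}--\eqref{e:bessel_d} in \cite{HM,ECP}: these come from inverting $a^{-\mu}\cL_\mu(a\sqrt{2\lambda})/(r^{-\mu}\cL_\mu(r\sqrt{2\lambda}))$ with $\cL=K$, and the tail behavior is governed by the singularity of $K_\mu(a\sqrt{2\lambda})/K_\mu(r\sqrt{2\lambda})$ at $\lambda=0$. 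For fixed $s$ one expects $Q_a^{(\mu)}(s<\tau_r<\infty)$ to be, roughly, increasing and polynomially controlled in $\mu$ — something like $Q_a^{(\mu)}(s<\tau_r<\infty)\leqq C(\mu)\,s^{-\mu}$ for $d\geqq3$ with $C(\mu)$ of at most geometric growth in $\mu$, and a $1/\log s$-type bound with polynomially growing constant when $d=2$ — and then the geometric factor $(a/r)^{-n}$ beats any such growth, so the sum over $n\geqq1$ converges and is dominated by its first term, which is of strictly smaller order ($s^{-(1+\nu)}$ versus $s^{-\nu}$, and $(\log s)^{-?}$ lower order when $d=2$). Establishing this uniform bound — tracking how the implied constants in the Tauberian/Laplace-inversion argument of \cite{HM,ECP} depend on the index — is the part that requires real work; everything else is bookkeeping.

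Concretely I would proceed in this order: (1) integrate the Theorem~\ref{t:D} densities over $A$ and over $u\in(s,\infty)$, isolating the $n=0$ term as $s_r(A)Q_a^{(0)}(s<\tau_r<\infty)$ and invoking \eqref{e:bessel_2}/\eqref{e:bessel_d}; (2) bound the $n\geqq1$ remainder using $|T_n|\leqq1$, $|C_n^\nu(y)|\leqq Cn^{2\nu-1}$, and the observation $\int_s^\infty\rho_{a,r}^{(n)}(u)\,du = Q_a^{(n)}(s<\tau_r<\infty)\leqq Q_a^{(n)}(0<\tau_r<\infty)$, together with $0<a/r$ so $(r/a)^n$ is summable; (3) prove the uniform-in-index tail estimate for $Q_a^{(\mu)}(s<\tau_r<\infty)$ needed to conclude the remainder is $o$ of the main term, either by re-examining the inversion argument of \cite{HM,ECP} with explicit index dependence or by a direct asymptotic analysis of $K_\mu$ near the origin; (4) assemble: main term $+$ $o$(main term), uniformly in $A$ since every error carries the harmless factor $s_r(A)\leqq1$. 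The main obstacle is step (3); once the index-uniform bound is in hand, steps (1), (2), (4) are routine. A cosmetic point worth noting is that, strictly, $\int_{S^{d-1}_r}T_n(z_1/r)\,ds_r(z)=0$ for $n\geqq1$, so for $A=S^{d-1}_r$ the remainder vanishes identically and only the cancellation-free estimate above is needed for proper subsets $A$.
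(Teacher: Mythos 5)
Your decomposition — pull out the $n=0$ term as $s_r(A)\,Q_a^{(\nu)}(t<\tau_r<\infty)$, invoke the Bessel tail asymptotics for the main term, and control the $n\geqq1$ remainder by a uniform-in-index tail bound — is exactly the paper's strategy. But there is a concrete error that propagates through your plan: the coefficient in Theorem~\ref{t:D} is $(a/r)^n$, \emph{not} $(a/r)^{-n}$, and since we are assuming $a>r$ this factor \emph{grows} geometrically. You write ``note $(a/r)^{-1}<1$'' and later ``$(r/a)^n$ is summable,'' so you are treating the series as if it had geometrically decaying coefficients; it does not. Consequently your phrase ``the geometric factor $\dots$ beats any such growth'' has the logic backwards: it is the decay of $Q_a^{(n)}(t<\tau_r<\infty)$ in $n$ that must beat the geometric growth of $(a/r)^n$, and a bound of the form $Q_a^{(\mu)}\leqq C(\mu)t^{-\mu}$ with $C(\mu)$ allowed to grow geometrically would need more care to justify summability for a given finite $t$.

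On your step (3), the ``part that requires real work,'' the paper takes a much shorter route than re-examining the Laplace-inversion constants in \cite{HM,ECP}. Lemma~\ref{l:bessel} gives, for $d\geqq3$, the clean explicit bound
\begin{equation*}
P_a(t<\sigma_r<\infty) \;\leqq\; \frac{r^{2\nu}}{2^\nu\Gamma(\nu+1)\,t^\nu},
\end{equation*}
proved simply by comparing $\sigma_r$ with the \emph{last} hitting time $L_r$ and integrating the equilibrium measure of the ball, using only $\mu_r(\bR^d)=2\pi^{d/2}r^{d-2}/\Gamma(\tfrac{d}{2}-1)$ and the trivial bound $e^{-|x-a|^2/2s}\leqq1$. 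Applied with index $n$ (resp.\ $n+\nu$), this gives $Q_a^{(n)}(t<\tau_r<\infty)\leqq r^{2n}/(2^n n!\,t^n)$, and the \emph{factorial} decay in $n$ easily dominates the geometric growth $(a/r)^n$: for $d=2$ one gets $|I_t|\leqq(2/t)e^{ar/2}=o(1/\log t)$ for $t>1$, and for $d\geqq3$ one combines with \eqref{3e:est} to get $J_t=O(t^{-1-\nu})=o(t^{-\nu})$. So the uniform-in-index input you need is elementary potential theory (capacity of $\bB_r$), not a refined Tauberian argument; with the sign of the coefficient corrected, the rest of your outline — integrate over $A$ and over $(t,\infty)$, isolate $n=0$, bound the tail termwise — matches the paper's proof.
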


\begin{rem} {\rm 
For the distribition function $Q_a^{(\nu)}(t<\tau_r<\infty)$ of 
the first hitting time of the Bessel process, 
Hamana et al. \cite{HKS} has shown a precise asymptotic expansion and, 
using the results, we can show asymptotic expansions 
for our joint distribution functions.   
The details will be published elsewhere.
}\end{rem}

For a proof of Theorem \ref{t:asy}, we show the following estimate 
for the tail probability of $\sigma_r$.   

\begin{lemma} \label{l:bessel}
Assume $d\geqq3$.   Then, for $t>0$, we have 
\begin{equation*}
P_a(t<\sigma_r<\infty) \leqq \frac{r^{2\nu}}{2^\nu\Gamma(\nu+1)t^\nu}.
\end{equation*}
\end{lemma}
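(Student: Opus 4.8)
The plan is to reduce the bound to a Gaussian moment computation via the Markov property at the deterministic time $t$. Writing $\sigma_r=\inf\{t\geqq0:|B_t|=r\}$, the Markov property gives
\begin{equation*}
P_a(t<\sigma_r<\infty)=E_a\big[\mathbf{1}_{\{\sigma_r>t\}}\,P_{B_t}(\sigma_r<\infty)\big],
\end{equation*}
where $P_x(\sigma_r<\infty)$ denotes the probability that a Brownian motion started at $x$ ever meets $S^{d-1}_r$. Since $|B_\cdot|$ is a $d$-dimensional Bessel process of index $\nu=(d-2)/2>0$, which is transient and admits $s(x)=-x^{-2\nu}$ as a scale function, optional stopping gives $P_x(\sigma_r<\infty)=(r/|x|)^{2\nu}$ when $|x|\geqq r$ and $P_x(\sigma_r<\infty)=1\leqq(r/|x|)^{2\nu}$ when $|x|\leqq r$; in all cases $P_x(\sigma_r<\infty)\leqq(r/|x|)^{2\nu}$. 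Bounding the indicator by $1$ therefore yields
\begin{equation*}
P_a(t<\sigma_r<\infty)\leqq r^{2\nu}\,E_a\big[|B_t|^{-2\nu}\big].
\end{equation*}

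It remains to estimate $E_a[|B_t|^{-2\nu}]$. Under $P_a$ the vector $B_t$ is Gaussian with mean $ae_1$ and covariance $tI_d$, so with $Z$ a standard $d$-dimensional Gaussian vector and by homogeneity of $|\cdot|^{-2\nu}$,
\begin{equation*}
E_a\big[|B_t|^{-2\nu}\big]=E\big[|ae_1+\sqrt{t}\,Z|^{-2\nu}\big]=t^{-\nu}\,E\big[|(a/\sqrt{t})\,e_1+Z|^{-2\nu}\big].
\end{equation*}
At this point I would invoke the elementary fact that the convolution of two nonnegative, radially symmetric, radially non-increasing functions attains its maximum at the origin; by the layer-cake formula this comes down to the observation that the volume of the intersection of two balls of fixed radii is largest when they are concentric. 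Applied to the Gaussian density and to $y\mapsto|y|^{-2\nu}$ (radially decreasing since $\nu>0$, locally integrable since $2\nu<d$), this gives $E[|w+Z|^{-2\nu}]\leqq E[|Z|^{-2\nu}]$ for every $w\in\bR^d$. Passing to polar coordinates, and using $d-1-2\nu=1$, $\vol(S^{d-1})=2\pi^{d/2}/\Gamma(d/2)$ and $d/2=\nu+1$,
\begin{equation*}
E\big[|Z|^{-2\nu}\big]=\frac{\vol(S^{d-1})}{(2\pi)^{d/2}}\int_0^\infty\rho^{\,d-1-2\nu}e^{-\rho^2/2}\,d\rho=\frac{2}{2^{d/2}\Gamma(d/2)}\int_0^\infty\rho\,e^{-\rho^2/2}\,d\rho=\frac{1}{2^\nu\Gamma(\nu+1)}.
\end{equation*}
Chaining the three displays gives the asserted inequality.

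The only step that is not pure bookkeeping is the symmetrization inequality $E[|w+Z|^{-2\nu}]\leqq E[|Z|^{-2\nu}]$, so that is where I expect the only real care to be needed; if one prefers to avoid the rearrangement argument entirely, the same conclusion follows by observing that $x\mapsto E_x[|B_t|^{-2\nu}]$ solves the heat equation with radially non-increasing initial datum $|x|^{-(d-2)}$, a class the heat semigroup preserves, whence its supremum over $x$ is attained at $x=0$. It is worth noting as a consistency check that the constant $r^{2\nu}/(2^\nu\Gamma(\nu+1))$ is exactly the $a\to\infty$ limit of $\kappa^{(\nu)}$ in \eqref{e:bessel_d}, as it must be since the bound is uniform over $a\geqq r$.
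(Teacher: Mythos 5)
Your proof is correct but takes a genuinely different route from the paper's. The paper bounds $P_a(t<\sigma_r<\infty)$ by $P_a(t<L_r<\infty)$, where $L_r$ is the \emph{last} hitting time of $S_r^{d-1}$, writes the latter via the equilibrium measure $\mu_r$ of the ball $\bB_r$ as $\int_t^\infty\!\int_{\bR^d} p(s,a,x)\,d\mu_r(x)\,ds$, bounds the heat kernel by its peak $(2\pi s)^{-d/2}$, and finishes with the known capacity $\mu_r(\bR^d)=2\pi^{d/2}r^{d-2}/\Gamma(\tfrac{d}{2}-1)$. You instead apply the Markov property at the deterministic time $t$, use the scale-function identity $P_x(\sigma_r<\infty)=1\wedge(r/|x|)^{2\nu}\leqq(r/|x|)^{2\nu}$, and reduce matters to bounding $E_a[|B_t|^{-2\nu}]$ by its value at $a=0$ via Anderson's symmetrization inequality (or, equivalently, preservation of radial monotonicity by the heat semigroup). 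Both proofs ultimately discard the starting point $a$ by comparing to the origin, but they do so at different stages: the paper's argument is ``potential-theoretic'' (last-exit decomposition and capacity), while yours is ``direct'' (one Markov step plus a Gaussian moment and a rearrangement lemma). The paper's route is shorter once one accepts the last-exit formula from Port--Stone; yours is more elementary in its ingredients, sidestepping equilibrium measures entirely, at the cost of invoking the convolution-rearrangement fact. Your closing consistency check that the constant equals $\lim_{a\to\infty}\kappa^{(\nu)}$ is correct and is a nice sanity check that the bound is the sharp uniform-in-$a$ one.
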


\begin{proof}
Let $L_r$ be the last hitting time of the Brownian motion $B$ to 
the spehere $S_r^{d-1}$:
\begin{equation*}
L_r = \sup\{s>0:|B_s|=r\}.
\end{equation*}
As usual we set $L_r=0$ when $B$ does not hit $S_r^{d-1}$.  
Then we have 
\begin{equation*}
P_a(t<\sigma_r<\infty) \leqq P_a(t<L_r<\infty).
\end{equation*}
\indent 
Denote by $\mu_r$ the equilibrium measure of the ball $\bB_r$ 
with radius $r$ and centered at the origin.   
Then it is well known (\cite{PS}) that 
\begin{equation*}
P_a(t<L_r<\infty) = \int_t^\infty ds \int_{\bR^d} \frac{1}{(2\pi s)^{d/2}} 
e^{-\frac{|x-a|^2}{2s}} d\mu_r(x).
\end{equation*}
Recalling now that the capacity of $\bB_r$ is 
$\mu_r(\bR^d)=2\pi^{\frac{d}{2}}r^{d-2}/\Gamma(\frac{d}{2}-1)$, we see 
\begin{equation*}
P_a(t<L_r<\infty) \leqq 
\int_t^\infty ds \int_{\bR^d} \frac{1}{(2\pi s)^{d/2}} d\mu_r(x) = 
\frac{r^{2\nu}}{2^\nu\Gamma(\nu+1)t^\nu}. \qedhere
\end{equation*}
\end{proof}

\begin{rem} {\rm 
For transient one-dimensional diffusion processes, 
the densities of the last hitting times are written 
by means of the transition densities.  
This is the case of the Bessel processes with dimensions $d\geqq3$ and, 
moreover, we have explicit expressions for the transition densities 
We can give another proof for Lemma \ref{l:bessel} by using these facts.
}\end{rem}

We can now give a proof of Theorem \ref{t:asy}.   
Note that the infinite sum below for the expression for the joint distribution 
is absoletely convergent.   

For $d=2$, we have by Theorem \ref{t:D}
\begin{equation*}
P_a(t<\sigma_r<\infty, B_{\sigma_r}\in A) = Q_a^{(0)}(\tau_r>t) s_b(A) + I_t,
\end{equation*}
where 
\begin{equation*}
I_t = 2 \sum_{n=1}^\infty \big(\frac{a}{r}\big)^n Q_a^{(n)}(t<\tau_r<\infty) 
\int_A T_n(\frac{z_1}{r}) ds_r(z).
\end{equation*}
Assume $t>1$ and note $|T_n(x)|=|\cos(n\arccos x)|\leqq 1$.   
Then, by Lemma \ref{l:bessel}, we get 
\begin{equation*}
|I_t|\leqq  
2 \sum_{n=1}^\infty \Big(\frac{a}{r}\Big)^n 
\frac{r^{2n}}{2^n \Gamma(n+1)t^n}
\leqq \frac{2}{t} \sum_{n=1}^\infty \Big(\frac{ar}{2}\Big)^n \frac{1}{n!} 
\leqq \frac{2}{t}e^{\frac{ar}{2}}
\end{equation*}
and, by \eqref{e:bessel_2}, the assertion of Theorem \ref{t:asy}.

For $d\geqq3$, we have 
\begin{equation*}
P_a(t<\sigma_r<\infty, B_{\sigma_r}\in A) = 
Q_a^{(\nu)}(t<\tau_r<\infty) s_r(A) + J_t,
\end{equation*}
where 
\begin{equation*}
J_t = \frac{1}{\nu} \sum_{n=1}^\infty (n+\nu) \big(\frac{a}{r}\big)^n 
Q_a^{(n+\nu)}(t<\tau_r<\infty) \int_A C_n^\nu(\frac{z_1}{r}) ds_r(z).
\end{equation*}
Hence, combining \eqref{3e:est} 
with Lemma \ref{l:bessel} and \eqref{e:bessel_d}, 
we see $J_t=O(t^{-1-\nu})$ and the assertion of Theorem \ref{t:asy}.  


\section{Brownian motion with drift}

Let $B=\{B_t\}_{t\geqq0}$ be a standard $d$-dimensional Brownian motion 
starting from $x=(a,0,...,0)$ as before and, 
for a constant vector $v\in \bR^d$, $B^{(v)}=\{B^{(v)}(t)\}_{t\geqq0}$ be 
a Brownian motion with drift $v$ defined by $B^{(v)}(t)=B_t+tv$.   
We denote by $\sigma^{(v)}_r$ the first hitting time of $B^{(v)}$ 
to the sphere $S^{d-1}_r$.  

The Cameron-Martin theorem and the strong Markov property 
of Brownian motion imply 
\begin{equation*}
E_a\Big[e^{-\lambda\sigma^{(v)}_r} e^{\la u,B^{(v)}(\sigma^{(v)}_r) \ra} 
I_{\{\sigma^{(v)}_r<\infty\}} \Big] = e^{-av_1} 
E_a\Big[e^{-(\lambda+\frac{|v|^2}{2})\sigma_r} e^{\la u+v,B_{\sigma_r} \ra}
I_{\{\sigma_r<\infty\}} \Big] .
\end{equation*}
Hence we can apply Theorem \ref{t:LT} to the right hand side and 
obtain the following:  

\begin{thm} 
For $\lambda>0$ and $u\in\bR^d$, we have 
\begin{equation*} \begin{split}
E_a\Big[e^{-\lambda\sigma^{(v)}_r} e^{\la u,B^{(v)}(\sigma^{(v)}_r) \ra} 
 & I_{\{\sigma_r<\infty\}} \Big] = 
 e^{-av_1} \Bigg\{ 
\frac{\cL_0(a\sqrt{2\lambda+|v|^2})}{\cL_0(r\sqrt{2\lambda+|v|^2})} 
\int_{S^1} e^{r\la u+v, z\ra} ds(z) \\
 & + 2 \sum_{n=1}^\infty 
\frac{\cL_n(a\sqrt{2\lambda+|v|^2})}{\cL_n(r\sqrt{2\lambda+|v|^2})} 
\int_{S^1} e^{r \la u+v,z\ra} T_n(z_1) ds(z) \Bigg\}
\end{split} \end{equation*}
when $d=2$ and, when $d\geqq3$, 
\begin{equation*} \begin{split}
E_a\Big[ & e^{-\lambda\sigma^{(v)}_r} e^{\la u,B^{(v)}(\sigma^{(v)}_r)\ra} 
I_{\{\sigma_r<\infty\}}\Big] \\ 
 & = \frac{1}{\nu} e^{-av_1} \sum_{n=0}^\infty (n+\nu) 
\frac{a^{-\nu}\cL_{n+\nu}(a\sqrt{2\lambda+|v|^2})}
{r^{-\nu}\cL_{n+\nu}(r\sqrt{2\lambda+|v|^2})}
\int_{S^{d-1}} e^{r \la u+v,z \ra} C_n^\nu(z_1) ds(z). 
\end{split} \end{equation*}
\end{thm}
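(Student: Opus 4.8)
The plan is to derive the statement directly from Theorem~\ref{t:LT}, once the identity displayed just above has been justified; all the actual work lies in that identity. I would realise $B$ as the coordinate process on $C([0,\infty);\bR^d)$ under $P_a$, with natural filtration $(\sF_t)$, and for the fixed drift vector $v$ introduce the exponential martingale
\begin{equation*}
M_t = \exp\Big( \la v,B_t\ra - a v_1 - \tfrac{|v|^2}{2}\,t \Big),
\end{equation*}
which has $M_0=1$ and implements the Cameron--Martin change of measure: the law of $B^{(v)}=\{B_t+tv\}_{t\geqq0}$, restricted to $\sF_t$, has density $M_t$ with respect to $P_a$.

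The main obstacle is that $\sigma^{(v)}_r$ (equivalently, the hitting time of the sphere under the tilted law) may be infinite, so the change of measure cannot be invoked at it directly. I would handle this by localisation: for each positive integer $n$ the time $\sigma_r\wedge n$ is bounded, optional stopping gives that the tilted law agrees with $M_{\sigma_r\wedge n}\,dP_a$ on $\sF_{\sigma_r\wedge n}$, and on $\{\sigma_r\leqq n\}\in\sF_{\sigma_r\wedge n}$ one has $M_{\sigma_r\wedge n}=M_{\sigma_r}$. Applying this to the functional $e^{-\lambda\sigma_r}e^{\la u,B_{\sigma_r}\ra}I_{\{\sigma_r\leqq n\}}$, which is $\sF_{\sigma_r\wedge n}$-measurable and bounded because $|B_{\sigma_r}|=r$ on $\{\sigma_r<\infty\}$ and $\lambda>0$, and then letting $n\to\infty$ by monotone convergence (the integrands being nonnegative and increasing in $n$) gives
\begin{equation*}
E_a\Big[e^{-\lambda\sigma^{(v)}_r}e^{\la u,B^{(v)}(\sigma^{(v)}_r)\ra}I_{\{\sigma^{(v)}_r<\infty\}}\Big]
= E_a\Big[e^{-\lambda\sigma_r}e^{\la u,B_{\sigma_r}\ra}\,M_{\sigma_r}\,I_{\{\sigma_r<\infty\}}\Big].
\end{equation*}

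Substituting the definition of $M_{\sigma_r}$ and regrouping the exponentials via $e^{-\lambda\sigma_r}e^{-\frac{|v|^2}{2}\sigma_r}=e^{-(\lambda+|v|^2/2)\sigma_r}$ and $e^{\la u,B_{\sigma_r}\ra}e^{\la v,B_{\sigma_r}\ra}=e^{\la u+v,B_{\sigma_r}\ra}$ yields precisely the identity stated before the theorem. To conclude, I would apply Theorem~\ref{t:LT} with $\lambda$ replaced by $\lambda+\tfrac{|v|^2}{2}$ (still positive, since $\lambda>0$) and $u$ replaced by $u+v$; since $\sqrt{2(\lambda+|v|^2/2)}=\sqrt{2\lambda+|v|^2}$, both the $d=2$ and the $d\geqq3$ formulae drop out immediately, the series being the absolutely convergent ones already obtained in the proof of Theorem~\ref{t:LT}. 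Everything past the Cameron--Martin step is purely algebraic, so I expect the localisation argument in the second step to be the only point requiring genuine care.
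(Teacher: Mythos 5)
Your proposal is correct and follows the same route as the paper: both establish the identity $E_a[e^{-\lambda\sigma^{(v)}_r}e^{\la u,B^{(v)}(\sigma^{(v)}_r)\ra}I_{\{\sigma^{(v)}_r<\infty\}}]=e^{-av_1}E_a[e^{-(\lambda+|v|^2/2)\sigma_r}e^{\la u+v,B_{\sigma_r}\ra}I_{\{\sigma_r<\infty\}}]$ via the Cameron--Martin change of measure and then plug $\lambda+|v|^2/2$ and $u+v$ into Theorem~\ref{t:LT}. The paper dispatches the localisation at the possibly-infinite stopping time with a terse appeal to ``Cameron--Martin and the strong Markov property,'' whereas you spell out the optional-stopping-plus-monotone-convergence argument explicitly; that is a harmless (indeed helpful) elaboration, not a different method.
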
 

We can invert the Laplace transform as before and show the following:

\begin{thm} \label{t:drift_density} 
For $t>0$ and $z\in\bR^d$ with $|z|=r$, we have 
\begin{equation*} \begin{split}
P_a(\sigma_r^{(v)}\in dt, B^{(v)}(\sigma_r^{(v)})\in dz) & = 
 e^{-av_1+\la v,z \ra} e^{-\frac{|v|^2}{2}t} 
\rho_{a,r}^{(0)}(t) dt ds_r(z) \\ 
 & + 2 e^{-av_1+\la v,z \ra} e^{-\frac{|v|^2}{2}t} 
\sum_{n=1}^\infty \big(\frac{a}{r}\big)^n \rho_{a,r}^{(n)}(t) 
T_n\big(\frac{z_1}{r}\big) dt ds_r(z)
\end{split} \end{equation*}
when $d=2$ and, when $d\geqq3$ 
\begin{equation*} \begin{split} 
P_a(\sigma^{(v)}_r & \in dt, B^{(v)}_{\sigma_r}\in dz) \\ 
 & = \frac{1}{\nu} e^{-av_1+\la v,z \ra} e^{-\frac{|v|^2}{2}t} 
\sum_{n=0}^\infty \big( n+\nu \big) 
\big(\frac{a}{r}\big)^n  \rho_{a,r}^{(n+\nu)}(t) 
C_n^{\nu}\big(\frac{z_1}{r}\big) dt ds_r(z).
\end{split} \end{equation*}
\end{thm}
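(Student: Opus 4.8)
The plan is to obtain the joint law by pushing the Cameron--Martin change of measure down to the level of the pair $(\sigma_r, B_{\sigma_r})$, rather than only to the level of Laplace transforms. Let $M_t = \exp\big(\la v, B_t\ra - av_1 - \frac{|v|^2}{2}t\big)$ be the Cameron--Martin exponential martingale (note $\la v, B_0\ra = av_1$), and let $P_a^{(v)}$ denote the law of $B^{(v)}$. Applying optional stopping at the bounded time $\sigma_r\wedge t$ gives $dP_a^{(v)} = M_{\sigma_r\wedge t}\,dP_a$ on $\sF_{\sigma_r\wedge t}$; restricting to $\{\sigma_r\leqq t\}$, where $M_{\sigma_r\wedge t}=M_{\sigma_r}$, and letting $t\to\infty$ by monotone convergence (using $M_{\sigma_r}>0$ and $E_a[M_{\sigma_r}I_{\{\sigma_r\leqq t\}}] = P_a^{(v)}(\sigma_r\leqq t)\leqq1$), one gets for every bounded Borel $F$ on $(0,\infty)\times S^{d-1}_r$ that
\[
E_a\big[F(\sigma_r^{(v)}, B^{(v)}(\sigma_r^{(v)}))\,I_{\{\sigma_r^{(v)}<\infty\}}\big]
= E_a\big[F(\sigma_r, B_{\sigma_r})\, M_{\sigma_r}\, I_{\{\sigma_r<\infty\}}\big].
\]
On $\{\sigma_r<\infty\}$ one has $|B_{\sigma_r}|=r$, so $M_{\sigma_r}=e^{-av_1+\la v,B_{\sigma_r}\ra}e^{-\frac{|v|^2}{2}\sigma_r}$, and hence
\[
P_a(\sigma_r^{(v)}\in dt,\ B^{(v)}(\sigma_r^{(v)})\in dz)
= e^{-av_1+\la v,z\ra}\,e^{-\frac{|v|^2}{2}t}\,
P_a(\sigma_r\in dt,\ B_{\sigma_r}\in dz).
\]
Substituting the two formulae of Theorem \ref{t:D} (the $d=2$ one with the $T_n$'s, the $d\geqq3$ one with the $C_n^\nu$'s) into the right-hand side produces exactly the two asserted expressions; the series converge absolutely because they do so in Theorem \ref{t:D} and we have merely multiplied by the bounded factor $e^{-av_1+\la v,z\ra}e^{-\frac{|v|^2}{2}t}$.

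Alternatively, one can follow literally the route hinted at by the phrase ``invert the Laplace transform as before'' and start from the Laplace transform formula in the preceding theorem. The identity $\int_0^\infty e^{-\lambda t}\rho^{(\mu)}_{a,r}(t)\,dt = a^{-\mu}\cL_\mu(a\sqrt{2\lambda})/\big(r^{-\mu}\cL_\mu(r\sqrt{2\lambda})\big)$ from Section 3, applied with $\lambda$ replaced by $\lambda+\frac{|v|^2}{2}$, shows that the coefficient $a^{-\nu}\cL_{n+\nu}(a\sqrt{2\lambda+|v|^2})/\big(r^{-\nu}\cL_{n+\nu}(r\sqrt{2\lambda+|v|^2})\big)$ equals $(a/r)^{-\nu}$ times the Laplace transform of $e^{-\frac{|v|^2}{2}t}\rho^{(n+\nu)}_{a,r}(t)$; splitting $e^{r\la u+v,z\ra}=e^{r\la v,z\ra}e^{r\la u,z\ra}$, rescaling the unit sphere to $S^{d-1}_r$ by $z\mapsto rz$, and inverting term by term (justified as in Section 3 via the bound \eqref{3e:est} on $C_n^\nu$ and the geometric factor $(a/r)^n$) reproduces the claimed densities. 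Uniqueness of the joint law then follows since the functions $z\mapsto e^{\la u,z\ra}$, $u\in\bR^d$, separate points of the compact sphere $S^{d-1}_r$ (so determine the $z$-part by Stone--Weierstrass) and a Laplace transform in $\lambda$ determines the $t$-part.

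The only point requiring genuine care in the first route is the passage $t\to\infty$: one must keep track of the events $\{\sigma_r=\infty\}$ and $\{\sigma_r^{(v)}=\infty\}$, since the drifted and driftless motions need not hit $S^{d-1}_r$ together, and of the fact that $M_t$ is not uniformly integrable over $[0,\infty]$ (monotone convergence nonetheless settles it). This is precisely the Cameron--Martin/strong-Markov identity already recorded at the opening of Section 5, so nothing new is needed. In the second route the analogous obstacle is the interchange of the infinite sum with Laplace inversion, which requires a bound on $\rho^{(n+\nu)}_{a,r}(t)$ uniform enough in $n$; this is why I would run the argument through the first route, where every convergence issue has already been dispatched in the proof of Theorem \ref{t:D}.
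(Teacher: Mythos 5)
Your proposal is correct, and your first route is a genuinely cleaner path than the one the paper takes. The paper records the Cameron--Martin identity only at the Laplace-transform level (the display opening Section~5, which is the special case $F(t,z)=e^{-\lambda t}e^{\langle u,z\rangle}$ of your pathwise identity), substitutes Theorem~\ref{t:LT} to obtain Theorem~5.1, and then disposes of Theorem~\ref{t:drift_density} with the phrase ``invert the Laplace transform as before,'' meaning: redo the term-by-term inversion of Section~3 with $\lambda$ replaced by $\lambda+|v|^2/2$. Your route instead notes that the Girsanov change of measure, carried through the stopping time $\sigma_r$ via optional stopping at $\sigma_r\wedge n$ and monotone convergence, already gives
\begin{equation*}
P_a(\sigma_r^{(v)}\in dt,\ B^{(v)}(\sigma_r^{(v)})\in dz)
= e^{-av_1+\langle v,z\rangle}\,e^{-\frac{|v|^2}{2}t}\,
P_a(\sigma_r\in dt,\ B_{\sigma_r}\in dz),
\end{equation*}
so that Theorem~\ref{t:drift_density} falls out of Theorem~\ref{t:D} by multiplication with a bounded factor, with no fresh interchange of sum and Laplace inversion to justify. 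That is a real economy: the convergence concerns you flag for your second route are exactly what the paper quietly defers to Section~3, and your first route makes them moot by reusing what Theorem~\ref{t:D} already delivers. Your second route is precisely the paper's argument, stated more explicitly. The one notational slip worth tidying is that $\sigma_r$ and $\sigma_r^{(v)}$ are distinct random times on the common space (the hitting times of $B$ and of $B^{(v)}=B+tv$ respectively), so when you pass from $\tilde P_a$-language to $P_a$-language you should say the $\tilde P_a$-law of $(\sigma_r,B_{\sigma_r})$ coincides with the $P_a$-law of $(\sigma_r^{(v)},B^{(v)}(\sigma_r^{(v)}))$; your displayed identity has this right, but the surrounding prose blurs the two a little.
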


Next, assuming $a>r$, we consider the asymptotic behavior 
of the distribution function 
$P(t<\sigma_r^{(v)}<\infty, B^{(v)}(\sigma_r^{(v)})\in A)$ as $t\to\infty$ 
for a fixed $A\subset S_r^{d-1}$.   
As is easily guessed as earlier, the leading term is given by the first terms 
of the right hand sides in Theorem \ref{t:drift_density}.   

\begin{thm} \label{t:drift_asy}
For any Borel subset $A$ of $S_r^{d-1}$, we have 
\begin{equation*} \begin{split}
 P_a(t< & \sigma_r^{(v)}<\infty, B^{(v)}(\sigma_r^{(v)})\in A) \\
 & = 2\log\big(\frac{a}{r}\big) e^{-av_1} \int_A e^{\la v,z\ra} ds_r(z) 
\frac{1}{t(\log t)^2} e^{-\frac{|v|^2}{2}t} (1+o(1))
\end{split} \end{equation*}
when $d=2$ and 
\begin{equation*} \begin{split}
 P_a(t< & \sigma_r^{(v)}<\infty, B^{(v)}(\sigma_r^{(v)})\in A) \\
 & = \frac{2L(\nu)}{|v|^2} e^{-av_1} \int_A e^{\la v,z\ra} ds_r(z) 
t^{-\nu-1} e^{-\frac{|v|^2}{2}t} (1+o(1))
\end{split} \end{equation*}
when $d\geqq3$, where 
\begin{equation*}
L(\nu) = \frac{r^{2\nu}}{2^\nu \Gamma(\nu)} 
\Big\{1-\Big(\frac{r}{a}\Big)^{2\nu}\Big\}.
\end{equation*}
\end{thm}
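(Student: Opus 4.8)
The plan is to derive the asymptotic behavior of the joint distribution function of $(\sigma_r^{(v)}, B^{(v)}(\sigma_r^{(v)}))$ directly from the density formula in Theorem~\ref{t:drift_density}. Integrating that formula over $t'\in(t,\infty)$ and $z\in A$, we obtain
\begin{equation*}
P_a(t<\sigma_r^{(v)}<\infty, B^{(v)}(\sigma_r^{(v)})\in A)
= e^{-av_1}\sum_{n}c_n \int_A e^{\la v,z\ra}C_n^\nu\big(\tfrac{z_1}{r}\big)\,ds_r(z)
\int_t^\infty e^{-\frac{|v|^2}{2}s}\rho_{a,r}^{(n+\nu)}(s)\,ds,
\end{equation*}
(with $T_n$ in place of $C_n^\nu$ and the obvious constants when $d=2$). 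The key observation is that the exponential factor $e^{-|v|^2 s/2}$ makes the leading asymptotics governed not by the polynomial tail of $\rho_{a,r}^{(\mu)}$ but by a Laplace-type evaluation near the lower endpoint $s=t$: writing $F_\mu(t)=\int_t^\infty e^{-\frac{|v|^2}{2}s}\rho_{a,r}^{(\mu)}(s)\,ds$ and integrating by parts, $F_\mu(t) = e^{-\frac{|v|^2}{2}t}\int_t^\infty e^{-\frac{|v|^2}{2}(s-t)}\big(-\tfrac{d}{ds}\big)\big(\text{tail of }\rho\big)\,ds$; since $\rho_{a,r}^{(\mu)}(t)\sim (\text{polynomial tail})'$ has a known decay, the dominant contribution is $e^{-\frac{|v|^2}{2}t}\cdot \tfrac{2}{|v|^2}\rho_{a,r}^{(\mu)}(t)(1+o(1))$, up to care about differentiating the asymptotics.

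More precisely, I would first establish the single-index asymptotics $F_\mu(t) = \frac{2}{|v|^2}e^{-\frac{|v|^2}{2}t}\rho_{a,r}^{(\mu)}(t)(1+o(1))$ for each fixed $\mu\geqq 0$. The ingredient needed is the tail asymptotics of $\rho_{a,r}^{(\mu)}(t)$ itself, which can be extracted from \eqref{e:bessel_2}--\eqref{e:bessel_d}: since $Q_a^{(\nu)}(t<\tau_r<\infty)=\int_t^\infty\rho_{a,r}^{(\nu)}(s)\,ds$, differentiating gives $\rho_{a,r}^{(0)}(t)\sim \frac{2\log(a/r)}{t(\log t)^2}$ for $d=2$ and $\rho_{a,r}^{(\nu)}(t)\sim \nu\kappa^{(\nu)}t^{-\nu-1}$ for $d\geqq 3$ (this differentiation of an asymptotic relation is legitimate here because $\rho_{a,r}^{(\mu)}$ is eventually monotone, or one invokes the sharper results of \cite{HKS}). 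Substituting into $F_\mu$ and matching constants, for $d\geqq 3$ the $n=0$ term produces $\frac{2}{|v|^2}e^{-\frac{|v|^2}{2}t}\,\nu\kappa^{(\nu)}t^{-\nu-1}$, and a short computation identifies $\nu\kappa^{(\nu)} = \frac{r^{2\nu}}{2^\nu\Gamma(\nu)}\{1-(r/a)^{2\nu}\}\cdot\frac{r^\nu a^{-\nu}}{\cdots}$ — i.e. one must check this equals $L(\nu)$ after incorporating the factor $(a/r)^n|_{n=0}=1$ and the prefactor $1/\nu$; this bookkeeping should reproduce exactly $\frac{2L(\nu)}{|v|^2}$. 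Similarly for $d=2$ the $n=0$ term gives $2\log(a/r)\,e^{-av_1}\int_A e^{\la v,z\ra}ds_r(z)\cdot\frac{1}{t(\log t)^2}e^{-\frac{|v|^2}{2}t}$.

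The remaining step is to show the $n\geqq 1$ terms are negligible, i.e. $o$ of the leading term. Here the exponential damping is decisive: using $|T_n|\leqq 1$, $|C_n^\nu(y)|\leqq Cn^{2\nu-1}$ on $[-1,1]$ (the estimate \eqref{3e:est}), and the crude bound $F_{n+\nu}(t)\leqq e^{-\frac{|v|^2}{2}t}Q_a^{(n+\nu)}(t<\tau_r<\infty)\leqq e^{-\frac{|v|^2}{2}t}\cdot\frac{r^{2(n+\nu)}}{2^{n+\nu}\Gamma(n+\nu+1)t^{n+\nu}}$ (Lemma~\ref{l:bessel} extended, or the Bessel analogue thereof), one gets $\sum_{n\geqq 1}(a/r)^n (n+\nu)|F_{n+\nu}(t)|\cdot Cn^{2\nu-1}\leqq C' e^{-\frac{|v|^2}{2}t} t^{-\nu-1}\sum_{n\geqq 1}(ar/2)^n n^{2\nu}/(n!\, t^{n-1}) = O(e^{-\frac{|v|^2}{2}t}t^{-\nu-2})$, which is indeed $o(t^{-\nu-1}e^{-\frac{|v|^2}{2}t})$; the $d=2$ case is identical with $t^{-\nu-1}$ replaced by $t^{-1}$ and is dominated by $\frac{1}{t(\log t)^2}$. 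I expect the main obstacle to be the first step — justifying $F_\mu(t)\sim \frac{2}{|v|^2}e^{-\frac{|v|^2}{2}t}\rho_{a,r}^{(\mu)}(t)$ rigorously, since it requires differentiating the known asymptotics \eqref{e:bessel_2}--\eqref{e:bessel_d} (or quoting \cite{HKS} for a sharp density estimate) and then controlling the Laplace-type integral $\int_t^\infty e^{-\frac{|v|^2}{2}(s-t)}\rho_{a,r}^{(\mu)}(s)\,ds$ against its endpoint value; the dominated-convergence/monotonicity argument needed there is the only genuinely delicate point, everything else being the constant-chasing and the easy tail bound.
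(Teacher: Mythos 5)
Your outline matches the paper's at the top level: integrate Theorem~\ref{t:drift_density} over $t'\in(t,\infty)$ and $z\in A$, write the result as a sum over $n$ of $\int_A(\cdots)ds_r$ times $H^{(n+\nu)}(t):=\int_t^\infty e^{-|v|^2 s/2}\rho^{(n+\nu)}_{a,r}(s)\,ds$, identify the $n=0$ term as the leading order, and show the $n\geqq1$ terms are negligible. The paper does not rederive the $n=0$ asymptotics via a Laplace-method/differentiation argument; it simply quotes the asymptotics of $H^{(\nu)}(t)$ from \cite{ProcAMS}. That choice sidesteps the differentiation-of-asymptotics issue you flag as the ``main obstacle,'' but it is not where your argument actually breaks.

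The genuine gap is in your bound on the $n\geqq1$ tail. You estimate
\begin{equation*}
H^{(n+\nu)}(t)\leqq e^{-\frac{|v|^2}{2}t}\,Q_a^{(n+\nu)}(t<\tau_r<\infty)\leqq e^{-\frac{|v|^2}{2}t}\,\frac{r^{2(n+\nu)}}{2^{n+\nu}\Gamma(n+\nu+1)\,t^{n+\nu}},
\end{equation*}
i.e.\ you pull $e^{-|v|^2 s/2}$ out at $s=t$ and invoke Lemma~\ref{l:bessel}. The resulting power of $t$ for the $n$th term is $t^{-(n+\nu)}$, so the dominant $n=1$ term is of order $t^{-(\nu+1)}e^{-|v|^2 t/2}$ --- the \emph{same} order as the claimed leading term, not smaller. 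Your final display contains an arithmetic slip: after factoring $t^{-\nu-1}$ from the sum, what remains is $\sum_{n\geqq1}c_n\,t^{-(n-1)}$, whose $n=1$ term is $O(1)$, not $O(1/t)$; the whole sum is therefore $O\bigl(t^{-\nu-1}e^{-|v|^2 t/2}\bigr)$, not $O\bigl(t^{-\nu-2}e^{-|v|^2 t/2}\bigr)$. With a remainder that might be comparable to the leading term, the stated asymptotics does not follow.

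The paper closes exactly this gap with Lemma~\ref{l:5_ijou}, which gives $H^{(\mu)}(t)\leqq C\,r^{2\mu}\Gamma(\mu)^{-1}(2t)^{-(\mu+1)}e^{-|v|^2 t/2}$ with $C$ depending only on $|v|$ and $r$, uniformly over all indices $\mu$ corresponding to dimension $\geqq3$. The crucial point is the extra factor $t^{-1}$ compared with what you get from the trivial pullout. It is obtained by a different mechanism: write $H^{(\mu)}(t)=\alpha\int_t^\infty e^{-\alpha s}P_a(t<\sigma_r\leqq s)\,ds$, apply the Markov property to replace the starting point $ae$ by a spatial integral against the Gaussian kernel $p(t,ae,y)\leqq(2\pi t)^{-d/2}$, and then control $\int_0^\infty e^{-\alpha s}ds\int_{\bR^d}P_y(\sigma_r\leqq s)\,dy$ by potential theory (last hitting time, equilibrium measure of $\bB_r$, and Le Gall's identity \eqref{e:le_gall}). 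The Gaussian prefactor $(2\pi t)^{-d/2}=(2\pi t)^{-(\mu+1)}$ is what supplies the extra power of $t$. Without something like this, your approach cannot separate the $n\geqq1$ remainder from the leading term, and the theorem is not established.
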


In order to estimate the higher order terms, 
we recall from \cite{ProcAMS} the asymptotic result for 
\begin{equation*}
H^{(\nu)}(t) := \int_t^\infty e^{-\frac{|v|^2}{2}s} \rho_{a,r}^{(\nu)}(s) ds,
\end{equation*}
where $\rho_{a,r}^{(\nu)}$ is the density of the first hitting time $\tau_r$ 
to $r$ of the Bessel process with index $\nu$ starting from $a$:
when $d=2$, 
\begin{equation*}
H^{(\nu)}(t) = \frac{2\log(a/r)}{t(\log t)^2} e^{-\frac{|v|^2}{2}t} (1+o(1))
\end{equation*}
and, when $d\geqq3$ 
\begin{equation} \label{5e:H_d_3}
H^{(\nu)}(t) = \frac{2L(\nu)}{|v|^2t^{\nu+1}} e^{-\frac{|v|^2}{2}t} (1+o(1)).
\end{equation}
\indent
The assertion of Theorem \ref{t:drift_asy} follows from the following lemma:   

\begin{lemma} \label{l:5_ijou}
There exists a constant $C$, depending on $|v|$ and $r$, such that 
\begin{equation*}
H^{(\nu)}(t) \leqq \frac{Cr^{2\nu}}{\Gamma(\nu)} \frac{1}{(2t)^{\nu+1}} 
e^{-\frac{|v|^2}{2}t}
\end{equation*}
holds for all $d\geqq3$.   
\end{lemma}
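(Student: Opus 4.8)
\medskip
\noindent\textbf{Proof plan.}
The plan is to pass to the radial part of $B$ and to bound $H^{(\nu)}$ by the \emph{last} exit time of the Bessel process from level $r$, whose density is explicit. Recall that $R_t=|B_t|$ is a Bessel process of index $\nu=(d-2)/2>0$ started at $a>r$, that $\sigma_r=\tau_r$, and that, by the definition of $\rho^{(\nu)}_{a,r}$, $H^{(\nu)}(t)=E_a^{(\nu)}\big[e^{-\frac{|v|^2}{2}\tau_r};\,t<\tau_r<\infty\big]$. Since $d\geqq3$ the process $R$ is transient; put $\Lambda_r=\sup\{s>0:R_s=r\}$. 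Because $R_s>r$ for $s<\tau_r$, the strong Markov property at $\tau_r$ gives $\Lambda_r=\tau_r+\Lambda_r'$ on $\{\tau_r<\infty\}$, where $\Lambda_r'$ is, conditionally on $\sF_{\tau_r}$, independent of $\sF_{\tau_r}$ and distributed as the last exit time from $r$ of a Bessel process of index $\nu$ started at $r$. Writing $\gamma:=E_r^{(\nu)}[e^{-\frac{|v|^2}{2}\Lambda_r}]\in(0,1)$ (the Laplace transform of that last exit time, started at $r$) and using $\{t<\tau_r<\infty\}\subseteq\{\Lambda_r>t\}$ together with non-negativity of the integrands, one gets
\[
\gamma\,H^{(\nu)}(t)=E_a^{(\nu)}\big[e^{-\frac{|v|^2}{2}(\tau_r+\Lambda_r')};\,t<\tau_r<\infty\big]\leqq E_a^{(\nu)}\big[e^{-\frac{|v|^2}{2}\Lambda_r};\,\Lambda_r>t\big]=\int_t^\infty e^{-\frac{|v|^2}{2}s}\,\ell_r(s)\,ds,
\]
where $\ell_r$ denotes the density of $\Lambda_r$ under $E_a^{(\nu)}$.

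Next I would make the two remaining inputs explicit. For a transient one-dimensional diffusion the last-exit density is $\ell_r(s)=p^R(s,a,r)/G^R(r,r)$, where $p^R$ is the transition density with respect to the speed measure $2x^{2\nu+1}dx$ and $G^R$ is the Green function; for the Bessel process $p^R(s,x,y)=(2s(xy)^\nu)^{-1}e^{-(x^2+y^2)/2s}I_\nu(xy/s)$. From $I_\nu(z)\leqq(z/2)^\nu e^z/\Gamma(\nu+1)$ and $-(x^2+y^2)/2s+xy/s=-(x-y)^2/2s\leqq0$ one gets, for all $s>0$ and all $x,y$, the clean bound $p^R(s,x,y)\leqq(2^{\nu+1}\Gamma(\nu+1)s^{\nu+1})^{-1}$; and the substitution $u=r^2/s$ together with the classical identity $\int_0^\infty e^{-u}I_\nu(u)u^{-1}\,du=1/\nu$ gives $G^R(r,r)=(2\nu r^{2\nu})^{-1}$. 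Hence $\ell_r(s)\leqq r^{2\nu}/(2^\nu\Gamma(\nu)s^{\nu+1})$ for every $s>0$. For $\gamma$ I would use that it is the ratio of the $\tfrac{|v|^2}{2}$-resolvent kernel to the Green function of $R$ at $(r,r)$, so that $\gamma=2\nu I_\nu(r|v|)K_\nu(r|v|)$; this quantity is continuous and strictly positive in $\nu$ and tends to $1$ as $\nu\to\infty$ (equivalently, $\Lambda_r'$ is stochastically decreasing in $\nu$, so $\gamma$ is increasing in $\nu$), whence $\gamma\geqq c_0>0$ for all $\nu\geqq\tfrac12$, i.e.\ for all $d\geqq3$, with $c_0$ depending only on $r|v|$.

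Combining these estimates, for every $t>0$,
\[
H^{(\nu)}(t)\leqq\frac1\gamma\int_t^\infty e^{-\frac{|v|^2}{2}s}\,\frac{r^{2\nu}}{2^\nu\Gamma(\nu)s^{\nu+1}}\,ds\leqq\frac1\gamma\cdot\frac{r^{2\nu}}{2^\nu\Gamma(\nu)t^{\nu+1}}\cdot\frac{2}{|v|^2}e^{-\frac{|v|^2}{2}t}=\frac{4}{\gamma|v|^2}\cdot\frac{r^{2\nu}}{\Gamma(\nu)(2t)^{\nu+1}}e^{-\frac{|v|^2}{2}t},
\]
which is the asserted bound with $C=4/(\gamma|v|^2)\leqq4/(c_0|v|^2)$, a constant depending only on $|v|$ and $r$.

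I expect the main obstacle to be exactly the uniformity of $C$ in $d$: it comes down to the uniform positive lower bound for $\gamma=2\nu I_\nu(r|v|)K_\nu(r|v|)$ over $\nu\geqq\tfrac12$, and here the naive term-by-term estimate $I_\nu(z)\geqq(z/2)^\nu/\Gamma(\nu+1)$ is too wasteful (it decays like $2^{-\nu}$), so one must invoke either the exact Bessel resolvent identity together with the limit $\nu I_\nu(z)K_\nu(z)\to\tfrac12$, or the monotonicity of $\nu\mapsto\nu I_\nu(z)K_\nu(z)$, or the stochastic-monotonicity argument sketched above. A secondary, more routine point is the careful justification of the decomposition $\Lambda_r=\tau_r+\Lambda_r'$ (in particular that $\Lambda_r'$ carries no atom at $0$ and is genuinely independent of $\sF_{\tau_r}$ with the stated law) and of the last-exit density formula $\ell_r=p^R(\cdot,a,r)/G^R(r,r)$ for the Bessel process.
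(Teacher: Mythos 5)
Your proof is correct in outline, and it takes a genuinely different route from the paper's. The paper stays in $\mathbf{R}^d$: it first disposes of $d=3,4$ by invoking the asymptotic \eqref{5e:H_d_3}, and then for $d\geqq5$ it writes $H^{(\nu)}(t)=\alpha\int_t^\infty e^{-\alpha s}P_a(t<\sigma_r\leqq s)\,ds$, inserts the crude heat-kernel bound $P_a(t<\sigma_r\leqq s)\leqq(2\pi t)^{-d/2}\int P_y(\sigma_r\leqq s-t)\,dy$, and then splits $\{\sigma_r\leqq s\}$ into $\{0<L_r\leqq s\}$ plus a term it controls by Le Gall's identity \eqref{e:le_gall}; the case split $d\leqq4$ versus $d\geqq5$ is forced because $\int_{|y|\geqq r}(r/|y|)^{2(d-2)}dy$ diverges for $d\leqq4$. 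You instead work entirely with the radial Bessel process: you factor out $\gamma=E_r^{(\nu)}[e^{-\alpha\Lambda_r}]$ via the strong Markov decomposition $\Lambda_r=\tau_r+\Lambda_r'$, bound what remains by the last-exit density $\ell_r(s)=p^R(s,a,r)/G^R(r,r)$, and use the clean uniform kernel bound $p^R(s,x,y)\leqq(2^{\nu+1}\Gamma(\nu+1)s^{\nu+1})^{-1}$ coming from $I_\nu(z)\leqq(z/2)^\nu e^z/\Gamma(\nu+1)$. Your bound $\ell_r(s)\leqq r^{2\nu}/(2^\nu\Gamma(\nu)s^{\nu+1})$ is, incidentally, exactly what underlies Lemma \ref{l:bessel} in the paper (it is the Bessel-process version of the equilibrium-measure computation there). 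The advantage of your route is that it treats all $d\geqq3$ in one shot and avoids Le Gall's identity; the price is that the uniformity of $C$ in $d$ is pushed onto the Bessel-function fact that $\gamma=2\nu I_\nu(r|v|)K_\nu(r|v|)$ stays bounded away from $0$ for $\nu\geqq\tfrac12$, which (as you correctly flag) requires the nontrivial but known monotonicity of $\nu\mapsto\nu I_\nu(z)K_\nu(z)$, or the explicit evaluation $\gamma(\tfrac12)=(1-e^{-2r|v|})/(2r|v|)$ together with the limit $\gamma\to1$; a naive series bound does not suffice. So your argument is sound once those two standard inputs (the last-exit density formula for transient one-dimensional diffusions and the monotonicity/limit of $\nu I_\nu K_\nu$) are cited; the paper chooses machinery that avoids both at the cost of a two-case analysis.
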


\begin{proof}
We use \eqref{5e:H_d_3} when $d=3$ and $d=4$, 
and assume $d\geqq5$ in the following.  

Denote by $P_y$ the $d$-dimensional Wiener measure with starting point $y$ 
and use the same notation $\sigma_r$ for the first hitting time to $S_r^{d-1}$ 
of the corresponding Brownian motion.    
Moreover, let $p(t,x,y)=(2\pi t)^{-d/2}\exp(-|y-x|^2/2t)$ be 
the Gaussian kernel and set $\alpha=|v|^2/2$ for simplicity.   
Then we have 
\begin{equation*}
H^{(\nu)}(t)= \alpha \int_t^\infty e^{-\alpha s} P_a(t<\sigma_r\leqq s)ds
\end{equation*}
and, setting $e=(1,0,...,0)$, 
\begin{align*}
P_a(t<\sigma_r\leqq s) & \leqq \int_{\bR^d}p(t,ae,y)P_y(\sigma_r\leqq s-t)dy \\
 & \leqq \frac{1}{(2\pi t)^{d/2}} \int_{\bR^d} P_y(\sigma_r\leqq s-t) dy
\end{align*}
by the Markov property of Brownian motion.   
Hence we get, after a simple change of variables, 
\begin{equation*}
H^{(\nu)}(t) \leqq \frac{\alpha e^{-\alpha t}}{(2\pi t)^{d/2}} 
\int_0^\infty e^{-\alpha s}ds \int_{\bR^d} P_y(\sigma_r\leqq s)dy.
\end{equation*}
\indent
Now let $L_r$ be the last hitting time of the Brownian motion to $S_r^{d-1}$.  
Then we have 
\begin{equation}  \label{e:devide}
\int_{\bR^d} P_y(\sigma_r\leqq s) = \int_{\bR^d} P_y(0 < L_r \leqq s)dy + 
\int_{\bR^d} P_y(\sigma_r \leqq s < L_r)dy.
\end{equation}
For the second term of the right hand side, Le Gall \cite{le_gall} has shown 
\begin{equation} \label{e:le_gall} 
\int_{\bR^d} P_y(\sigma_r \leqq s < L_r)dy = 
\int_{\bR^d} P_y(\sigma_r \leqq s) P_y(\sigma_r < \infty)dy, 
\end{equation}
which implies 
\begin{equation} \label{e:suffice}
\int_{\bR^d} P_y(\sigma_r \leqq s < L_r)dy \leqq  
\int_{\bR^d} P_y(\sigma_r < \infty)^2 dy.
\end{equation}
This estimate is sufficient for our purpose.   
We give another elementary proof of \eqref{e:le_gall} 
after completing the proof of Lemma \ref{l:5_ijou}.

As in the previous section, we denote by $\mu_r$ the equilibrium measure 
of the ball $\bB_r$.   
Then we have, for the first term of \eqref{e:devide}, 
\begin{align*}
 \int_0^\infty e^{-\alpha s}ds \int_{\bR^d} P_y(0 < L_r \leqq s) dy 
 & = \int_0^\infty e^{-\alpha s}ds \int_{\bR^d}dy \int_0^s d\tau
\int_{\bR^d} p(\tau,y,z) d\mu_r(z) \\
 & = \int_0^\infty e^{-\alpha s}ds \int_0^s d\tau \int_{\bR^d} d\mu_r(z) \\
 & = \frac{2\pi^{d/2}r^{d-2}}{\alpha^2\Gamma(\frac{d}{2}-1)}.
\end{align*}
For the second term, we recall 
\begin{equation*}
P_y(\sigma_r<\infty) = 1\wedge\Big(\frac{r}{|y|}\Big)^{d-2}.  
\end{equation*}
Then, by \eqref{e:suffice}, we get 
\begin{align*}
 \int_0^\infty e^{-\alpha s}ds \int_{\bR^d} P_y(\sigma_r<\infty)^2 dy 
 & = \frac{1}{\alpha} \Big\{ \int_{|y|\leqq r} dy + 
\int_{|y|\geqq r} \big(\frac{r}{|y|}\big)^{2(d-2)}dy \Big\} \\
 & = \frac{2\pi^{\frac{d}{2}}r^d}{\alpha\Gamma(\frac{d}{2})} 
\Big( \frac{1}{d} + \frac{1}{d-4} \Big).
\end{align*}
\indent
Combining the above inequalities, 
we obtain the assertion of the lemma.
\end{proof}

\begin{proof}[{\it Proof of} \eqref{e:le_gall}]
By the Markov property of Brownian motion, we have 
\begin{align*}
 \int_{\bR^d} P_y(\sigma_r \leqq s < L_r) dy 
 & = \int_{\bR^d} E_y[1_{\{\sigma_r\leqq s\}} 1_{\{L_r>s\}}]dy \\
 & = \int_{\bR^d} E_y[1_{\{\sigma_r\leqq s\}} 
E_{B_s}[1_{\{L_r>0\}}]]dy \\
 & = \int_{\bR^d}dy \int_{\bR^d} 
E_y[ 1_{\{\sigma_r\leqq s\}} P_{B_s}(L_r>0) |B_s=x] p(s,y,x)dx \\
 & = \int_{\bR^d} dx \int_{\bR^d} P_x(L_r>0) 
P_y(\sigma_r\leqq s|B_s=x) p(s,y,x) dy.  
\end{align*}
Note here that $P_x(L_r>0)=P_x(\sigma_r<\infty)$ and 
that the time reversal of a pinned Brownian motion is again a pinned 
Brownian motion.   
Then we obtain 
\begin{align*}
\int_{\bR^d} P_y(\sigma_r \leqq s < L_r) dy & = 
 \int_{\bR^d} P_x(\sigma_r<\infty) dx 
\int_{\bR^d} P_x(\sigma_r\leqq s|B_s=y) p(s,x,y) dy \\
 & = \int_{\bR^d} P_x(\sigma_r<\infty) P_x(\sigma_r \leqq s) dx. \qedhere
\end{align*}
\end{proof}


\noindent{\bf Acknowledgment}

The authors were partially supported by JSPS KAKENHI 
Grant Numbers 20K03634 and 21K03298.

\noindent Yuji Hamana \\
hamana@math.tsukuba.ac.jp \\
Department of Mathematics \\
University of Tsukuba \\
1-1-1 Tennodai, Tsukuba 305-8571, Japan \\

\bigskip

\noindent Hiroyuki Matsumoto \\
matsu@math.aoyama.ac.jp \\
Department of Mathematics \\
Aoyama Gakuin University \\
Fuchinobe 5-10-1, Sagamihara 252-5258, Japan

\end{document}